\theoremstyle{plain}
\newtheorem{The}{Theorem}
\newtheorem*{The*}{Theorem}
\newtheorem{Pro}{Proposition}
\newtheorem{Lem}{Lemma}
\newtheorem{Cor}{Corollary}
\newtheorem*{Cor*}{Corollary}
\theoremstyle{definition}
\newtheorem*{Def}{Definition}
\newtheorem{Rem}{Remark}
\newtheorem{Exa}{Example}
\newtheorem*{Rem*}{Remark}
\numberwithin{equation}{section}
\DeclareMathOperator{\del}{\partial}
\newcommand{\R}{\mathbb{R}}
\newcommand{\C}{\mathbb{C}}
\newcommand{\N}{\mathbb{N}}
\newcommand{\Z}{\mathbb{Z}}
\begin{document}

\title[Constrained Willmore Tori and elastic curves]{Constrained Willmore tori and elastic curves in $2-$dimensional space forms}

\author{Lynn Heller}

\address{Lynn Heller \\  Institut f\"ur Mathematik\\
Universit{\"a}t T\"ubingen\\ Auf der Morgenstelle
10\\ 72076 T\"ubingen\\ Germany
}

\email{lynn-jing.heller@uni-tuebingen.de}

\subjclass{53A04, 53A05, 53A30, 37K15}

\date{\today}

\thanks{Author supported by SFB/Transregio 71}

\begin{abstract} 
In this paper we consider two  special classes of constrained Willmore tori in the $3-$sphere.  The first class is given by the rotation of  closed elastic curves in the upper half plane - viewed as the hyperbolic plane - around the $x-$axis. The second is given as the preimage of  closed constrained elastic curves, i.e., elastic curves with enclosed area constraint,  in the round $2-$sphere under the Hopf fibration.
We show that all conformal types can be isometrically immersed into $S^3$ as constrained Willmore (Hopf) tori
and explicitly parametrize all constrained elastic curves in $H^2$ and $S^2$ in terms of the Weierstrass elliptic functions. Further, we determine the closing condition for the curves and compute the Willmore energy and the conformal type of the resulting tori.
 \end{abstract}

\maketitle


\section{Introduction}
\label{sec:intro}
Let $f: M \rightarrow S^3$ be a conformally immersed compact surface. It is called 
constrained Willmore, if it is a critical point of the Willmore energy $ \int_{M}(H^2+1)dA $ under conformal variations.
The minimizer of the Willmore energy for a fixed conformal class can be viewed as the optimal realization of the underlying Riemann surface in three space.
Such a minimizer exists for $M$, see \cite{KS}, if the underlying conformal class provides  a immersion to $S^3$ with Willmore energy below $8 \pi$. Further, the minimizer is smooth and constrained Willmore. It is an open question whether the infimum of the Willmore energy is below $8 \pi$ for every conformal class. \\

The global minimizer of the Willmore energy in the class of tori is the Clifford torus, see \cite{MN}. Further,  \cite{NS}  have shown that the homogenous tori $T_r$ are minimizers of their respective conformal classes near the Clifford torus.
For rectangular conformal classes the minimizers are conjectured to be the $2-$lobed tori of revolution, which have constant mean curvature in $S^3$, see figure \ref{2lobe}. \begin{figure}[htbp]\label{2lobe}
  \centering
  \includegraphics[scale=0.31]{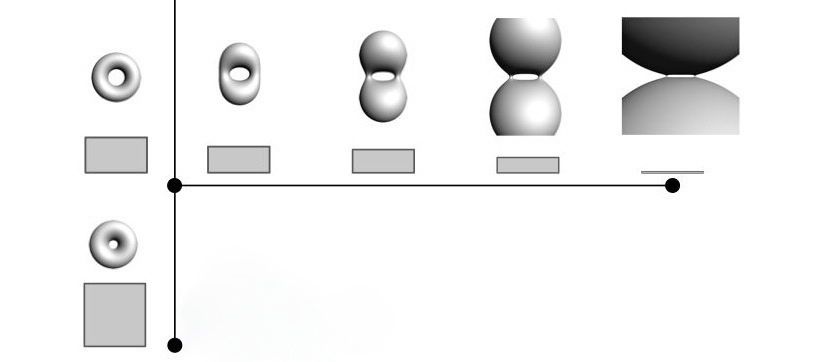}
  \caption{Embedded two lobed CMC tori of revolution in $S^3.$ (by Nick Schmitt)}
\end{figure}
The Willmore energy of this family increases monotonically with the conformal type, see \cite{KSS}, and converges to $8\pi$. The limiting surface is a double covering of a geodesic sphere. Thus the minimizer of the Willmore energy for tori with prescribed rectangular conformal class exists by \cite{KS}. 
Tori of revolution can be constructed by rotation of a closed curve in the upper half plane around the $x-$axis. The torus is constrained Willmore if and only if the curve is elastic in the upper half plane viewed as $H^2.$
Since \cite{AL} have shown  that all embedded CMC tori are rotational, the pictured tori are the minimizers of the Willmore energy in their respective conformal classes restricted to CMC tori. For non rectangular  conformal classes no candidates for the minimizers are known in the literature, since tori of revolution are always of rectangular conformal types.\\

First examples of Willmore tori, which are not minimal in a space form were found  by \cite{P} in the class of Hopf tori. These are  given by the  preimage of closed curves in $S^2$ under the  Hopf fibration.  The torus is (constrained) Willmore, if and only if the corresponding curve is (constrained) elastic, i.e., critical points of the energy functional with prescribed length and enclosed area. In contrast to tori of revolution \cite{P} shows that all conformal classes can be realized algebraically as Hopf tori. \\

In the literature there exists an alternative notion of constrained Willmore surfaces. These are critical points of the Willmore functional with prescribed enclosed volume and surface area  (Helfrich model). Since Hopf tori are flat and the mean curvature of the torus is simply the geodesic curvature of the curve in $S^2$, constrained Willmore Hopf tori are constrained Willmore in both sense. \\

In this paper we study the two classes of constrained Willmore tori which comes from closed elastic curves in $H^2$ and closed (constrained) elastic curves in $S^2.$ We first show that every conformal class can be realized as a constrained Willmore (Hopf) torus via the direct method of calculus of variations.  This generalizes the result by \cite{P}. Then we derive explicit formulas for (constrained) elastic curves in $2-$dimensional space forms. By viewing $H^2$ and $S^2$ as subsets of $\C P^1$, we define the Schwarzian derivative $q$ as a M\"obius invariant of a curve $\gamma$ in $\C P^1$. The curve $\gamma$ is constrained elastic if and only if its Schwarzian derivative is stationary under the first order KdV flow. Thus $q$ is generically given in terms of a Weierstrass $\wp-$function defined on a torus $\C / \Gamma,$ which plays the role of a spectral curve in our setting. We compute the closing conditions for the curves and show that every constrained elastic curve is isospectral to an elastic curve. 
Then we give formulas for the Willmore energy and the conformal type of the resulting torus.\\

In their paper \cite{LS} Langer and Singer constructed elastic curves in $S^2$ and $H^2$ without the enclosed area constraint. Our result is a generalization of this and uses the Schwarzian derivative instead of the geodesic curvature of the curve. \\

The author wants to thank Christoph Bohle and Cheikh B. Ndiaye for helpful discussions and Nick Schmitt for making the figures throughout the paper.

\section{Equivariant tori in the $3-$sphere}
\noindent
We consider $S^3 \subset \C^2.$  
\begin{Def}
A map $f: \C \rightarrow S^3 $  is called $\R-$equivariant, if there exist group homomorphisms 
\begin{equation*}
\begin{split}
&M: \R \rightarrow \{\text{M\"obius transformations of }S^3 \}, t \mapsto M_t,\\
&\tilde M: \R \rightarrow \{\text{conformal transformations of } \C\}, t \mapsto \tilde M_t,
\end{split}
\end{equation*}
such that 
$$f \circ \tilde M_t = M_t \circ f,  \text{ for all } t.$$
\end{Def}
\noindent
If $f$ is doubly periodic with respect to a lattice $\Gamma \subset \C,$ then $f$ is a torus and the following proposition holds.
\begin{Pro}\label{equivariant}
Let $f: T^2 \cong \C/\Gamma \rightarrow S^3$ be a equivariant conformal immersion. Then there exist a holomorphic coordinate $z = x+iy$ of $T^2$ together with $m , n \in \N$ and $gcd(m,n) = 1$ such that 
$$f(x,y) = \begin{pmatrix} e^{i m x}&0\\ 0& e^{i n x}\end{pmatrix} f(0,y),$$
up to isometries of $S^3$ and the identification of $S^3$ with $SU(2).$
The  curve $\gamma(y) := f(0,y)$ (not necessarily closed) is called the profile curve of the surface.   
\end{Pro}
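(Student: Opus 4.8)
The plan is to determine the two one-parameter groups $\tilde M_t$ and $M_t$ separately, and then feed the double periodicity of $f$ into the resulting description.

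\smallskip
\noindent\textbf{The domain side.}
First I would pin down $\tilde M_t$. A one-parameter group of conformal automorphisms of $\C$ has the form $z \mapsto a(t)z + b(t)$, and descending to $\C/\Gamma$ forces $a(t)\Gamma = \Gamma$; since the stabilizer of $\Gamma$ in $\C^\times$ is discrete while $a(0)=1$, we get $a(t)\equiv 1$, so $\tilde M_t(z) = z + t v$ for a fixed $v \in \C\setminus\{0\}$ (if the group is trivial, $f$ is a curve and there is nothing to prove). Choosing the holomorphic coordinate $z = x+iy$ so that $v \in \R_{>0}$ and rescaling the parameter, I may assume $\tilde M_t(x,y) = (x+t,y)$. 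Evaluating $f\circ\tilde M_t = M_t\circ f$ at $x=0$ then gives $f(x,y) = M_x(\gamma(y))$ with $\gamma(y) = f(0,y)$, so the whole surface is the $M_t$-orbit of its profile curve.

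\smallskip
\noindent\textbf{The target side.}
Next I would show that $M_t$ is \emph{elliptic}. The image $\Sigma = f(\C/\Gamma)$ is compact and $M_t$-invariant, and the translation flow $\tilde M_t$ on the flat torus preserves Lebesgue measure; by Poincar\'e recurrence almost every $z$ satisfies $\tilde M_{t_k}z \to z$ for some $t_k \to +\infty$, hence $M_{t_k}f(z) = f(\tilde M_{t_k}z) \to f(z)$. A hyperbolic or parabolic one-parameter subgroup of $\mathrm{Conf}(S^3)\cong O^+(4,1)$ has no recurrent orbit outside its fixed-point set, and since $\Sigma$ is a surface I may pick such a recurrent $z$ with $f(z)$ not fixed, a contradiction. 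Thus $M_t$ is elliptic, its closure is a torus contained in a maximal compact subgroup, and identifying this with $\mathrm{Isom}(S^3)=\SO(4)$ I may assume, after an isometry of $S^3$, that $M_t \in \SO(4)$. Decomposing $\R^4 = \C^2$ into the two invariant orthogonal $2$-planes of this rotation group and taking them to be the coordinate complex lines gives $M_t = \begin{pmatrix} e^{i\alpha t} & 0\\ 0 & e^{i\beta t}\end{pmatrix}$ on $\C^2$ for some $\alpha,\beta\in\R$.

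\smallskip
\noindent\textbf{Integrality.}
Finally I would extract the integers from the periodicity. Substituting $f(x,y)=M_x\gamma(y)$ into $f(z+\lambda)=f(z)$ for $\lambda=\lambda_1+i\lambda_2\in\Gamma$ and using that $M$ is a homomorphism yields $\gamma(y+\lambda_2) = M_{\lambda_1}^{-1}\gamma(y)$ for all $y$. If $\alpha/\beta\notin\mathbb{Q}$, the closure of a single orbit of $M_t$ through a point with both coordinates nonzero is the full $2$-torus $\{\operatorname{diag}(e^{is},e^{it})\,\gamma(y_0)\}$, already of dimension two, hence equal to $\Sigma$; in this degenerate case $\Sigma$ is a homogeneous torus, which is directly reparametrized into the asserted form. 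Otherwise $\alpha/\beta\in\mathbb{Q}$, and writing $\alpha=\mu m$, $\beta=\mu n$ with $m,n\in\N$ and $\gcd(m,n)=1$ the flow $M_t$ is periodic with $M_{2\pi/\mu}=\Id$, so $f$ acquires a period in the $x$-direction. Rescaling the coordinate by $z\mapsto \mu z$ absorbs $\mu$ and turns $M_x$ into $\begin{pmatrix} e^{imx} & 0\\ 0 & e^{inx}\end{pmatrix}$, giving $f(x,y)=\begin{pmatrix} e^{imx} & 0\\ 0 & e^{inx}\end{pmatrix}\gamma(y)$ as claimed.

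\smallskip
The identification of $\tilde M_t$ with a translation and the final rescaling are routine. The substantive point, and the step I expect to be the main obstacle, is the ellipticity of $M_t$: ruling out a hyperbolic or parabolic conformal flow and arranging the conjugation into $\SO(4)$ to be by an isometry rather than by a genuine conformal transformation. The commensurability $\alpha/\beta\in\mathbb{Q}$, i.e. cleanly separating the generic case from the homogeneous torus, is the second place that needs care.
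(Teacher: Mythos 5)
The paper offers no proof of Proposition \ref{equivariant} to compare against --- it is stated as a known normal form for equivariant tori (cf.\ \cite{FP}, \cite{H1}) --- so your argument must be judged on its own, and judged so it is essentially correct. The domain-side reduction works, with one touch-up: $\tilde M_t$ need not literally descend to $\C/\Gamma$; rather, equivariance together with $\Gamma$-periodicity of $f$ shows that each $a(t)\gamma$, $\gamma\in\Gamma$, is a period of $f$, so after passing to the full period lattice one gets $a(t)\Gamma=\Gamma$ and concludes by discreteness exactly as you do. The recurrence argument is the right tool for ellipticity and is watertight: every point of a linear flow on a flat torus is recurrent, the fixed set of a parabolic or loxodromic one-parameter group of M\"obius transformations is at most two points, and an immersion of a compact surface has finite point-preimages, so a recurrent point with non-fixed image exists. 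Your quarantine of the incommensurable case is also correct, though to close it you should note that $|z_1|$ is then constant on the connected image $\Sigma$ (otherwise $\Sigma$ would contain a one-parameter family of product tori, too big for the image of a compact surface), so $\Sigma$ equals a single homogeneous torus, and that a conformal immersion of a torus onto a flat torus is an affine covering, which yields the normal form there. Two routine normalizations are missing: when $\alpha\beta<0$ one applies the isometry $(z_1,z_2)\mapsto(z_1,\bar z_2)$ to make both frequencies non-negative, and the case $\beta=0$ (tori of revolution, $n=0$) belongs to the commensurable alternative even though $\alpha/\beta\notin\mathbb{Q}$ literally. Your parenthetical that a trivial group makes $f$ a curve is off: with both homomorphisms trivial every immersion is equivariant and the statement becomes vacuous, so nontriviality is a convention one must impose.

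The one substantive issue is the one you flagged yourself. The compact closure of an elliptic group $\{M_t\}$ is conjugate into $\SO(4)$ by Cartan's fixed point theorem applied to its action on hyperbolic $4$-space, but the conjugating map is a M\"obius transformation of $S^3$, not in general an isometry, and this cannot be improved: composing a torus of revolution with a non-isometric M\"obius transformation produces an equivariant immersion whose symmetry group consists of non-isometries, and such a surface need not be congruent to any surface in the stated normal form. So what your argument correctly proves is the Proposition with ``up to M\"obius transformations of $S^3$'' in place of ``up to isometries''; that looseness sits in the statement itself rather than in your proof, and the M\"obius version is all the paper ever uses, its constrained Willmore theory being conformally invariant.
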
 
\noindent
In this paper we only consider two very special cases of equivariant tori, namely the case of tori of revolution ($m= 1, n= 0$) and Hopf tori ($m= n = 1$). 

\begin{Def} Let $M$ be a compact and oriented surface and let $f: M \rightarrow S^3$ be an immersion into the round sphere. The \emph{Willmore energy} of $f$ is defined to be
$$\mathcal W (f) = \int_ M( H^2 + 1 ) dA,$$
where $H$ is the mean curvature of $f$ and $dA $ is  induced volume form. \\

\noindent
A conformal immersion $f: M \rightarrow S^3$ is called {\it Willmore}, if it is a critical point of the Willmore energy $W$ under all variations and it is called {\it constrained Willmore}, if it is a critical point of $W$ under conformal variations, see \cite{BoPetP} and \cite{S}.\\
\end{Def}

It is shown in \cite{LS1} that  the Willmore functional reduces to the energy functional $\int_{\gamma} \kappa^2 ds$ for surfaces of revolution, where $\kappa$ is the curvature of the profile curve in the hyperbolic plane, and $s$ is the arc length parameter. The conformal type of the torus is determined by the length of the curve in $H^2.$
Further, \cite{P} shows that the Willmore energy for  a Hopf torus reduces to the generalized energy functional $\int_{\gamma} (\kappa^2 + 1)ds$ of the corresponding curve in $S^2.$ In particular, the mean curvature of Hopf tori satisfies $H= \kappa$ and by construction the Gau\ss ian curvature is zero. The conformal type of the torus is determined by the length and enclosed area of the curve. Thus by the principle of symmetric criticality \cite{Palais}, i.e., the critical symmetric points are the symmetric critical points, a surface of revolution is constrained Willmore if and only if its profile curve is elastic in $H^2$ and a Hopf torus is constrained Willmore, if its profile curve is a critical point of the energy functional with prescribed length and enclosed area.

\begin{Def}
Let $\gamma$ be an arc length parametrized closed curve in a $2-$dimen-sional space form and $\kappa$ its geodesic curvature.
The curve is called {\it constrained elastic}, if it is a critical point of the energy functional $\int_{\gamma} \kappa^2 ds$ with fixed length and enclosed area.
\end{Def}

\begin{Pro}[\cite{BoPetP}]
Let $\gamma$ be an arclength parametrized curve into a $2-$di-mensional space form of constant curvature $G$ and let $\kappa$ be its geodesic curvature in the space form. 
The Euler-Lagrange equation for a constrained elastic curve is:
\begin{equation}\label{secondorder}
\kappa'' + \frac{1}{2}\kappa^3 + (\mu + G)\kappa + \lambda = 0,
\end{equation}
for real parameters $\mu$ and $\lambda.$
\end{Pro}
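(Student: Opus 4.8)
The plan is to treat ``constrained elastic'' as a constrained variational problem and to invoke the Lagrange multiplier principle: a critical point of the bending energy $E(\gamma)=\int_\gamma \kappa^2\,ds$ subject to the two scalar constraints $L(\gamma)=\int_\gamma ds=\text{const}$ and $A(\gamma)=\text{const}$ is, provided the constraints are independent at $\gamma$, an unconstrained critical point of $E-\alpha L-\beta A$ for suitable real multipliers $\alpha,\beta$. Since all three functionals are invariant under reparametrization, the tangential part of any variation field only reparametrizes $\gamma$ and contributes nothing to the first variation; hence it suffices to consider normal variations $\gamma_t$ with variation field $V=\phi N$, where $N$ is the unit normal along the arclength-parametrized $\gamma$ and $\phi$ is an arbitrary periodic function (periodic because $\gamma$ is closed).

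First I would record the three first-variation formulas at $t=0$. Writing $'=\partial_s$ and using the Frenet relations $\nabla_{\partial_s}T=\kappa N$, $\nabla_{\partial_s}N=-\kappa T$ in the space form, the variation of the arclength element is
$$\partial_t(ds)=-\kappa\phi\,ds,$$
and the variation of the signed enclosed area is $\partial_t A=\int_\gamma\phi\,ds$ (up to the orientation of $N$). The decisive and most delicate formula is the variation of the geodesic curvature,
$$\partial_t\kappa=\phi''+(\kappa^2+G)\phi.$$
This is the step I expect to be the main obstacle, and deriving it cleanly is the heart of the matter: one differentiates $\kappa=\langle\nabla_{\partial_s}T,N\rangle$ in $t$, commutes $\nabla_{\partial_t}$ past $\nabla_{\partial_s}$ using the commutator $[\partial_t,\partial_s]=\kappa\phi\,\partial_s$ (which follows from the arclength variation above), and evaluates the ambient curvature term via $R(\partial_t\gamma,\partial_s\gamma)\partial_s\gamma=G\,\phi N$, valid precisely because the space form has constant curvature $G$. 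Careful bookkeeping of the non-commutativity of $\partial_t$ and $\partial_s$ is where errors are easy to make.

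With these in hand I would compute
$$\partial_t E=\int_\gamma 2\kappa\,\partial_t\kappa\;ds+\int_\gamma\kappa^2\,\partial_t(ds)=\int_\gamma\left(2\kappa\phi''+2\kappa(\kappa^2+G)\phi-\kappa^3\phi\right)ds,$$
and integrate the first term by parts twice; since $\gamma$ is closed the boundary terms vanish, yielding
$$\partial_t E=\int_\gamma\left(2\kappa''+\kappa^3+2G\kappa\right)\phi\,ds.$$
Combining this with $\partial_t L=-\int_\gamma\kappa\phi\,ds$ and $\partial_t A=\int_\gamma\phi\,ds$, the condition $\partial_t(E-\alpha L-\beta A)=0$ for every periodic $\phi$ forces the pointwise Euler--Lagrange equation
$$2\kappa''+\kappa^3+2G\kappa+\alpha\kappa-\beta=0.$$
Dividing by $2$ and renaming the multipliers $\mu=\alpha/2$ and $\lambda=-\beta/2$ gives exactly \eqref{secondorder}, while simultaneously identifying $\mu$ as the multiplier of the length constraint and $\lambda$ as that of the area constraint. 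The only remaining point to verify is the genuine applicability of the Lagrange multiplier theorem, i.e.\ that the length and area constraints are functionally independent at $\gamma$ so that non-degenerate multipliers exist, which is routine away from degenerate (e.g.\ geodesic) configurations.
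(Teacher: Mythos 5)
Your proof is correct. The paper gives no proof of this proposition at all---it quotes it from \cite{BoPetP}---and your Lagrange-multiplier argument, built on the standard first-variation formulas $\partial_t(ds)=-\kappa\phi\,ds$, $\partial_t\kappa=\phi''+(\kappa^2+G)\phi$ and $\partial_t A=\pm\int_\gamma\phi\,ds$ for normal variations, is exactly the classical derivation (in the spirit of \cite{LS}), and it correctly identifies $\mu$ and $\lambda$ as the multipliers of the length and enclosed-area constraints, consistent with the paper's own remark following the proposition.
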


This equation is the well known  stationary first order modified Korteweg-de-Vries equation. The real parameters $\mu$ and $\lambda$ are the length and  respectively the enclosed area constraint for a closed curve.  A solution to $\mu = \lambda = 0$ is a free elastic curve in the space form of curvature $G.$ By multiplying the equation with $2 \kappa'$ the equation can be integrated once and yields
\begin{equation}\label{P4}
(\kappa')^2  =  -\frac{1}{4}\kappa^4 - (\mu + G)\kappa^2 - 2 \lambda \kappa - \nu.
\end{equation}
Here $\nu$ is a real integration constant. We denote the negative of the polynomial on the right hand side by $P_4$, i.e., $$P_4:= \frac{1}{4}x^4 + (\mu + G)x^2 + 2 \lambda x + \nu.$$ 

\begin{The}
For given real numbers $L_0$ and $A_0$ satisfying the isoperimetric inequality on $S^2$
$$L_0^2 - 4 \pi A_0  + A_0^2 \geq 0,$$ 
there exist a smooth constrained elastic curve in $S^2$ minimizing the energy $\mathcal E(\gamma) = \int (\kappa^2 + 1) ds$ with length $L(\gamma) = L_0$ and enclosed area $A(\gamma) = A_0.$
\end{The}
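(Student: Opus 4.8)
The plan is to apply the direct method of the calculus of variations. Since the length is pinned to $L_0$, the term $\int_\gamma 1\,ds = L_0$ is constant over the admissible class, so minimizing $\mathcal E$ is equivalent to minimizing the bending energy $\int_\gamma \kappa^2\,ds$. First I would fix the admissible class $\mathcal A$ of closed $W^{2,2}$ curves $\gamma\colon \R/L_0\Z \to S^2\subset\R^3$ parametrized by arc length, with length $L(\gamma)=L_0$ and enclosed signed area $A(\gamma)=A_0$. The isoperimetric inequality on $S^2$ is precisely the feasibility condition: $L_0^2-4\pi A_0+A_0^2\ge 0$ guarantees $\mathcal A\ne\emptyset$, so that $\inf_{\mathcal A}\int\kappa^2\,ds$ is taken over a nonempty set and is finite.

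Next I would take a minimizing sequence $\gamma_n\in\mathcal A$ with $\int\kappa_n^2\,ds\to\inf$. For a unit-speed curve on the unit sphere one has $\gamma_n''=-\gamma_n+\kappa_n\,\nu_n$ with $\nu_n=\gamma_n\times\gamma_n'$ the intrinsic unit normal, so $|\gamma_n''|^2=1+\kappa_n^2$ and
\begin{equation*}
\int_0^{L_0}|\gamma_n''|^2\,ds = L_0 + \int_0^{L_0}\kappa_n^2\,ds
\end{equation*}
is uniformly bounded. Hence $(\gamma_n)$ is bounded in $W^{2,2}([0,L_0],\R^3)$, and by the compact Sobolev embedding $W^{2,2}\hookrightarrow C^{1,\alpha}$ in one dimension a subsequence converges weakly in $W^{2,2}$ and strongly in $C^1$ to a limit $\gamma_\infty$. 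The $C^1$ limit still satisfies $|\gamma_\infty'|=1$ and $\gamma_\infty(s)\in S^2$, so it is a genuine closed immersed curve. Since $\kappa^2=|\gamma''|^2-1$ and $\gamma\mapsto\int|\gamma''|^2\,ds$ is convex, the bending energy is weakly lower semicontinuous, giving $\int\kappa_\infty^2\le\liminf\int\kappa_n^2$. The length is preserved by the parametrization, while the enclosed area, being a Stokes-type integral continuous under $C^1$ convergence, satisfies $A(\gamma_\infty)=\lim A(\gamma_n)=A_0$. Thus $\gamma_\infty\in\mathcal A$ attains the infimum and is a constrained minimizer.

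It remains to upgrade regularity. As a minimizer of $\int\kappa^2\,ds$ subject to the length and area constraints, $\gamma_\infty$ solves the Euler--Lagrange equation \eqref{secondorder} weakly, with $\mu$ and $\lambda$ arising as the Lagrange multipliers dual to the two constraints. Reading \eqref{secondorder} as a second-order ODE for $\kappa$ and invoking the standard bootstrapping for critical points of geometric energies yields $\kappa\in C^\infty$, and integrating the Frenet system then gives $\gamma_\infty\in C^\infty$; this is the smooth constrained elastic curve claimed.

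The step I expect to be the main obstacle is the compactness argument together with the preservation of both constraints in the limit: one must rule out degeneration of the minimizing sequence (concentration of curvature, or loss of enclosed area) and confirm that $\gamma_\infty$ is still a closed immersed curve carrying \emph{exactly} the prescribed length and area. Here the fixed length $L_0$ is what produces the uniform $W^{2,2}$ bound, whereas the isoperimetric inequality is what keeps $\mathcal A$ nonempty so that the limit cannot escape the constraint set. A secondary technical point is the continuity of the signed-area functional for immersed, not necessarily embedded, curves under $C^1$ convergence, which must be checked to conclude $A(\gamma_\infty)=A_0$.
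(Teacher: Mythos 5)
Your existence argument is essentially the paper's: the same direct method, the same nonemptiness of the constraint class via Pinkall's realization result (Theorem 1 of \cite{P}) under the isoperimetric inequality, the identical $W^{2,2}$ bound from $|\gamma_n''|^2 = 1+\kappa_n^2$ for unit-speed spherical curves, and passage of both constraints to the limit (the paper uses Gauss--Bonnet, $A(\gamma_n) = 2m\pi - \int\kappa_n\,ds$ with stable winding number, where you use continuity of the signed-area integral under $C^1$ convergence; both are fine, and your weak-$W^{2,2}$ convergence plus lower semicontinuity is in fact stated more carefully than the paper's bare appeal to Arzel\`a--Ascoli).

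The genuine gap is the regularity step, which you dismiss as ``standard bootstrapping.'' A priori the minimizer is only $W^{2,2}$, so $\kappa\in L^2$; you cannot yet read \eqref{secondorder} as an ODE for $\kappa$, because the cubic term $\kappa^3$ is then only in $L^{2/3}$ and need not even be locally integrable, so the equation does not make distributional sense as written, and in any case the multipliers $\mu,\lambda$ and the weak form of \eqref{secondorder} must first be extracted from the Euler--Lagrange equation for the immersion. Bootstrapping has no starting rung until one produces an a priori $L^\infty$ (or at least $L^3$) bound on $\kappa$, and this is exactly the nontrivial content of the paper's proof: it lifts $\gamma_0$ horizontally through the Hopf fibration to a curve $\tilde\gamma_0\subset S^3$ (a construction that is well defined and regularity-preserving for $W^{2,2}$ curves) and recasts the constrained-elastica condition as the parallelism, for the Levi-Civita connection of $S^3$, of the vector field
$$X = (\kappa^2+\lambda)\,T + 2\kappa'\,N + (2\kappa+\mu)\,B$$
along $\tilde\gamma_0$, where $(T,N,B)$ is the Fr\'enet frame. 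This first-integral structure forces $\kappa$ to be of bounded variation on a compact interval, hence $\kappa\in L^\infty$, and only then do Calder\'on--Zygmund estimates and iteration yield $\kappa\in C^\infty$ and smoothness of the curve. Unless you supply a substitute conservation-law argument of this kind, your final step is a placeholder rather than a proof; note also (a smaller point, left implicit in the paper as well) that invoking Lagrange multipliers presupposes that the differentials of the length and area constraints are linearly independent at $\gamma_0$.
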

\begin{Rem}
We use the notion of oriented enclosed area of a curve in $S^2$ used in \cite{P}. It is only well-defined modulo $4 \pi.$
\end{Rem}
\begin{proof}
The proof is a straightforward application of the direct method of calculus of variations.
We want to find a minimizer of the Willmore energy in the set
$$\mathcal S := \{\gamma : S^1 \rightarrow S^2 \text{ smooth }| L(\gamma) = L_0 \text{ and } A(\gamma)= A_0\}.$$
By Theorem 1 of \cite{P} the set is non empty, if the isoperimetric inequality holds. Thus $\mathcal E_0:= $ inf$\{\mathcal E(\gamma) | \gamma \in \mathcal S\} \geq 0.$ 
Without loss of generality we only consider arclength parametrized curves. Let $(\gamma_n)_{n \in \N}$ be a sequence in $\mathcal S$ such that 
$$\lim_{n\rightarrow \infty}\mathcal E (\gamma_n) = \mathcal E_0.$$
Since we have
\begin{equation}
\begin{split}
\int |\gamma_n'|^2 ds &= L_0\\
\int |\gamma_n''|^2 ds &= \int (<\gamma_n'', N_n>^2 + <\gamma_n'', \gamma_n>^2)ds = \int (\kappa_n^2 + 1) ds,  
\end{split}
\end{equation}
the sequence $(\gamma_n)_{n \in \N} $ is bounded in $W^{2,2}$ and has a convergent subsequence in $W^{2,2}$ by the Arcela-Ascoli theorem. Let $\gamma_0  $ denote the limit of this subsequence, then $\gamma_0$ is at least $\mathcal C^1$. Therefore $(\gamma_n)_{n \in \N} $ and $(\gamma_n')_{n \in \N} $ converges point wise. Further, by the Gau\ss-Bonnet theorem the enclosed area can be computed as $A(\gamma_n) = 2 m \pi - \int_{\gamma} \kappa_n ds = A_0,$ where $m$ is the winding number of the curve. 
Thus $\gamma_0$ is a minimizer of $\mathcal  E$ for curves lying in 
$$\tilde{\mathcal S} := \{ \gamma : S^1 \rightarrow S^2, \gamma \in W^{2,2} | L(\gamma) = L_0 \text{ and } A(\gamma)= A_0\}.$$ 

\noindent
It remains to show that $\gamma_0$ is smooth. For this we rewrite the Euler-Lagrange equation. The Hopf fibration induces a $S^1-$fiberbundle with canonical connection on $S^3.$ A conformal parametrization of the Hopf torus $f_0$ corresponding to $\gamma_0$  is obtained by taking the horizontal lift $\tilde \gamma_0$ of $\gamma_0$ as the profile curve of $f_0$, see Proposition \ref{equivariant}. Note that the horizontal lift is well defined for $W^{2,2}$ curves and preserves the regularity.
 Let $(T, N, B)$ denotes the Fr\'enet frame of $\tilde \gamma_0$. Then $\gamma_0$ is a constrained elastic curve in $S^2$ if and only if there exist real constants $\lambda$ and $\mu$ such that the vector field 
 $$X = (\kappa^2 + \lambda)T + 2 \kappa' N + (2\kappa + \mu) B$$
 is parallel with respect to the Levi-Civita connection on $S^3.$
 Thus $\kappa$ is a BV function on a compact interval and therefore $\kappa\in L^{\infty}$. Thus one can use the Cald\'eron-Zygmund estimates and obtain smoothness for $\kappa.$  
 \end{proof}

\begin{Cor}
Every conformal class of the torus can be realized as a constrained Willmore immersion in the $3-$sphere. 
\end{Cor}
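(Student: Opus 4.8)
The plan is to realize every conformal class by a constrained Willmore \emph{Hopf} torus, reducing the statement to the preceding Theorem together with the analysis of \cite{P}. Recall that a Hopf torus is determined by its profile curve $\gamma$ in $S^2$, and that by the principle of symmetric criticality cited in Section 2 such a torus is constrained Willmore precisely when $\gamma$ is a constrained elastic curve. Moreover, as recalled in the introduction, the conformal type of the Hopf torus over $\gamma$ depends only on the length $L(\gamma)$ and the oriented enclosed area $A(\gamma)$; concretely, the underlying lattice $\Gamma$ can be written down explicitly in terms of $L$ and $A$. The first step I would carry out is to make this dependence precise and to identify the image of the map $(L,A) \mapsto [\Gamma]$ into the moduli space of conformal classes of tori.

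The key input is then the result of \cite{P} that this map is surjective: every conformal class of the torus arises as the conformal type of some Hopf torus. Given an arbitrary conformal class, I would choose a pair $(L_0, A_0)$ with this property. Since such a pair is realized by an honest closed curve in $S^2$, it automatically satisfies the isoperimetric inequality $L_0^2 - 4\pi A_0 + A_0^2 \geq 0$ of Theorem 1 of \cite{P}. This places us exactly in the hypotheses of the preceding Theorem, which produces a smooth constrained elastic curve $\gamma_0 \colon S^1 \to S^2$ with $L(\gamma_0) = L_0$ and $A(\gamma_0) = A_0$. Its Hopf torus $f_0$ is then constrained Willmore by symmetric criticality, and by construction it has the prescribed conformal type, completing the argument.

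The main obstacle is the bookkeeping linking $(L,A)$ to the conformal class and verifying surjectivity onto the whole moduli space --- this is precisely where the content of \cite{P} enters, and where one must be careful that the enclosed area is only defined modulo $4\pi$ (cf.\ the Remark) so that the parametrization descends correctly. A secondary point to check is that the minimizing curve supplied by the preceding Theorem indeed gives rise to a smooth immersed Hopf torus; this is already guaranteed since the variational problem is posed over regular maps $S^1 \to S^2$: the profile curve $\gamma_0$ closes up, and its preimage under the Hopf fibration --- parametrized via the horizontal lift of $\gamma_0$ as in Proposition \ref{equivariant} --- is a smooth conformal immersion of $T^2 \cong \C/\Gamma$.
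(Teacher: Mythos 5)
Your argument is correct and follows essentially the same route as the paper: the paper's proof likewise combines the preceding existence theorem with Pinkall's result that the conformal type of a Hopf torus is $(L/2, A/2)$ and that the region where the isoperimetric inequality holds covers the whole moduli space. Your version merely spells out the bookkeeping (deducing the inequality from realizability by an actual curve, and the smoothness of the lifted torus) that the paper leaves implicit.
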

\begin{proof}
By \cite{P} the conformal type  of a Hopf torus is given by $(L/2, A/2)$ and the region, where the isoperimetric inequality holds covers the whole moduli space of conformal structures of tori. 
\end{proof}

\section{Constrained Elastic Curves in Space Forms}
Since the Willmore functional is M\"obius invariant, it seems to be more natural to consider a M\"obius invariant setup here.  Thus we consider $$\gamma: \R \rightarrow H^2, S^2, \R^2 \hookrightarrow \C P^1$$

\noindent
via affine coordinates. The M\"obius invariant of a map into $\C P^1$ is the Schwarzian derivative. It can be defined by the  following construction which can be found in \cite{BuPP}. Let $\gamma$ be a curve in $\C P^1.$ To $\gamma$ there exist a lift   $ \tilde \gamma$  to $\C^2$ (not necessarily closed) with respect to the canonical projection from $\C^2$ to $\C P^1.$ Further, there exists a complex valued function $a$ with 
$\hat \gamma := a \tilde \gamma$ such that $\det_{\C}(\hat \gamma, \hat \gamma') = 1.$
Thus $\hat \gamma''$ and $\hat \gamma$ are linearly dependent over $\C$ and there exists  a complex valued function $q$ satisfying 
\begin{equation}\label{HLL}
 \hat \gamma ''  + q \hat \gamma = 0.
 \end{equation}
\begin{Def}
The function $q$ is called the Schwarzian derivative of $\gamma.$
\end{Def}
\noindent 
The curve is uniquely determined by $q$ up to M\"obius transformations. A straightforward computation gives the following lemma. The lifts $\tilde \gamma$ needed for the computations are:
for $\R^2 \cong \C \hookrightarrow \C^2$ and $H^2 \hookrightarrow \R^2$ we use
$\tilde \gamma = (\gamma, 1)$,  and for $S^2$ we use $\tilde \gamma = \eta$, where $\eta \subset S^3 \subset \C^2$ is the horizontal lift of $\gamma$ under the Hopf fibration. 

\begin{Lem}\label{Lemma1}
Let $\gamma$ be a regular and arclength parametrized curve in a $2-$dimen-sional space form of constant curvature $G$ and let $\kappa$ be its geodesic curvature. 
Then the Schwarzian derivative $q$ of $\gamma$ is given by
$$q = \frac{i\kappa'}{2} + \frac{\kappa^2}{4}  + \frac{G}{4}.$$
Further, if $\gamma$ is constrained elastic in the space form, i.e., $\kappa$ is a real solution of the stationary mKdV equation (\ref{P4}) with real parameters $\lambda, \mu$ and $\nu,$ then $q$ satisfies the stationary KdV equation 
\begin{equation}\label{KdV}(q')^2 + 2 q^3 + c q^2 + 2d q + e = 0,
\end{equation}
with real parameters $c$, $d$ and $e$ given by
\begin{equation}
\begin{split}
c&= \mu - \tfrac{G}{2}\\
d&= -\tfrac{\nu}{4} - \tfrac{G^2}{16} - \mu \tfrac{G}{4}\\
e&= cd + \tfrac{\lambda^2}{4}+\tfrac{\mu^2 G}{4} - \tfrac{\nu G}{4}.
\end{split}
\end{equation}
\end{Lem}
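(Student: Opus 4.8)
The plan is to split the lemma into its two assertions: first the closed formula $q=\tfrac{i\kappa'}{2}+\tfrac{\kappa^2}{4}+\tfrac G4$ for the Schwarzian derivative of an arclength curve, and second the passage from the mKdV equation (\ref{P4}) for $\kappa$ to the KdV equation (\ref{KdV}) for $q$. Throughout I would use that, once the lift is normalized by $\det_{\C}(\hat\gamma,\hat\gamma')=1$, differentiating this relation gives $\det_{\C}(\hat\gamma,\hat\gamma'')=0$, so (\ref{HLL}) indeed determines $q$, and moreover, combining (\ref{HLL}) with the normalization, $q=\det_{\C}(\hat\gamma',\hat\gamma'')$. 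This determinant formula is the single tool I would apply in all three space forms.

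For the first assertion I would treat the flat and hyperbolic cases together, since both use the lift $\tilde\gamma=(\gamma,1)$. Here $\det_{\C}(\tilde\gamma,\tilde\gamma')=-\gamma'$, so the normalizing factor satisfies $a^2=-1/\gamma'$ and hence $\tfrac{a'}{a}=-\tfrac12\tfrac{\gamma''}{\gamma'}$; since the second coordinate of $\hat\gamma$ is simply $a$, equation (\ref{HLL}) reduces to $q=-a''/a$. It then remains to express $\gamma''/\gamma'$ through the geodesic curvature of the underlying conformal metric. In the flat case $\gamma'=e^{i\theta}$ gives $\gamma''/\gamma'=i\kappa$ and $q=\tfrac{i\kappa'}{2}+\tfrac{\kappa^2}{4}$, matching $G=0$. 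In the hyperbolic case, writing the Euclidean tangent as $\gamma'=y\,e^{i\phi}$ and using the geodesic-curvature identity for the metric $|dz|^2/y^2$ (equivalently $\dot\phi=\kappa-\cos\phi$), one obtains $\gamma''/\gamma'=i(\kappa-e^{i\phi})$; the extra horocycle term is exactly what produces the shift $\tfrac G4=-\tfrac14$ after forming $q=-a''/a$. For $S^2$ I would instead use the horizontal Hopf lift $\tilde\gamma=\eta\subset S^3$. From $|\eta|^2=1$, horizontality $\langle\eta,\eta'\rangle=0$ and the arclength condition $|\eta'|^2=1$ one reads off the structure equation $\eta''=-\eta+i\kappa\eta'$, with $\kappa$ the geodesic curvature of the base curve. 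A short computation then gives $\tfrac{a'}{a}=-\tfrac{i\kappa}{2}$ together with $\det_{\C}(\eta',\eta'')=\det_{\C}(\eta,\eta')$, and the determinant formula produces $q=\tfrac{i\kappa'}{2}+\tfrac{\kappa^2}{4}+1$, i.e.\ the case $G=4$ of the round Hopf base.

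For the second assertion I would recognize $q=\tfrac i2\kappa'+\tfrac14\kappa^2+\tfrac G4$ as a Miura transformation, which classically intertwines mKdV and KdV; the content of the lemma is to pin down the constants. Rather than expanding (\ref{KdV}) directly, I would first establish the once-differentiated form $q''+3q^2+cq+d=0$ and then integrate. Writing $q=g+ih$ with $g=\tfrac{\kappa^2}{4}+\tfrac G4$ and $h=\tfrac{\kappa'}{2}$, the imaginary part of $q''+3q^2+cq+d$ is a multiple of $\kappa'$; substituting $\kappa'''$ from the derivative of (\ref{secondorder}) makes it vanish exactly when $c=\mu-\tfrac G2$. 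The real part is a polynomial in $\kappa$; substituting $\kappa''$ from (\ref{secondorder}) and $(\kappa')^2$ from (\ref{P4}) collapses the quartic, quadratic and linear terms in $\kappa$ identically, and the surviving constant vanishes exactly when $d=-\tfrac\nu4-\tfrac{G^2}{16}-\mu\tfrac G4$. Finally, multiplying $q''+3q^2+cq+d=0$ by $2q'$ exhibits $(q')^2+2q^3+cq^2+2dq$ as constant; evaluating this constant (again via (\ref{secondorder}) and (\ref{P4})) gives the integration constant $-e$ with $e=cd+\tfrac{\lambda^2}{4}+\tfrac{\mu^2G}{4}-\tfrac{\nu G}{4}$, which is (\ref{KdV}).

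I expect the genuine obstacle to lie in the first assertion, specifically in the $S^2$ computation: one has to derive the structure equation $\eta''=-\eta+i\kappa\eta'$ for the horizontal lift and, crucially, verify that the coefficient $\kappa$ is the geodesic curvature of $\gamma$ in the round base rather than some curvature of $\eta$ in $S^3$ (this is where the Hopf-geometry facts $H=\kappa$ recalled above enter), so that the normalization $|\eta'|=1$ fixes the curvature scale $G=4$ correctly. By contrast the hyperbolic case only requires the conformal-metric curvature identity, and the second assertion, while computational, is organized cleanly by the real/imaginary split above and reduces to matching coefficients of powers of $\kappa$.
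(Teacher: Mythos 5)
Your proposal is correct and follows essentially the same route the paper intends: the paper gives no written proof beyond naming exactly the lifts you use ($\tilde\gamma=(\gamma,1)$ for $\R^2$ and $H^2$, the horizontal Hopf lift $\eta$ for $S^2$) and calling the computation straightforward, and your determinant formula $q=\det_{\C}(\hat\gamma',\hat\gamma'')$, the structure equations $\gamma''/\gamma'=i\kappa$, $\gamma''/\gamma'=i(\kappa-e^{i\phi})$, $\eta''=-\eta+i\kappa\eta'$, and the real/imaginary split of the Miura computation all check out, including the values of $c$, $d$ and $e$. The only thing worth adding is a one-line scaling remark (under $x\mapsto\lambda x$ one has $\kappa\mapsto\kappa/\lambda$, $G\mapsto G/\lambda^2$, $q\mapsto q/\lambda^2$, compatibly with the stated formula) to pass from your normalized cases $G=0,-1,4$ to the arbitrary curvature $G$ of the statement.
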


\begin{Rem}The transformation $\kappa \mapsto q$ of  an arclength parametrized curve is a geometric version of the well-known Miura transformation, see for example \cite{GP}. 
\end{Rem}
\noindent
Let
\begin{equation*}
\begin{split}
g_2 &:= \frac{c^2}{12} - d = \frac{(\mu + G)^2}{12} + \frac{\nu}{4} \\
g_3 &:=  -\frac{cd}{12} +  \frac{e}{4} + \frac{1}{6^3}c^3 = \frac{1}{216}(\mu+ G)^3 + \frac{1}{16}\lambda^2 - \frac{1}{24}\nu (\mu + G)\\
P_3 &:= 4 x^3 -g_2 x - g_3.
\end{split}
\end{equation*} 
If $D= g_2^3 -27 g_3^2  \neq  0$ then the differential equation 
\begin{equation}\label{1}\wp '^2 = P_3(\wp)
\end{equation}
 defines a double periodic meromorphic function - the Weierstrass $\wp$ function. Its periods $\omega_i$ are linearly independent over the reals, i.e., the $\omega_i$ generates a lattice $\Gamma$ in $\C,$ and $\wp$ is a well-defined function on  $T^2 = \C/ \Gamma.$ The equation (\ref{KdV}) is then solved by 
$$q(x) =  - 2 \wp(x+x_0) - \tfrac{1}{6}c,$$
for some constant $x_0 \in \C\setminus{\{0\}}.$  We refer to \cite{AS} for details on the Weierstrass elliptic functions.\\

A necessary condition for $q$ to be the Miura transformation of a real valued curvature function $\kappa$ is that the lattice invariants  $g_2$ and $g_3$ are real. We also need that  $D\neq 0$  to obtain a well-defined $\wp-$function. This requires the polynomial $P_3$ to have only simple roots. Then the generators of the lattice $\Gamma$ are linearly independent over the reals. We deal with the case of $P_3$ having multiple roots in section \ref{multiple roots}. For real $g_2$ and $g_3$, the lattice $\Gamma$ is rectangular or its double covering is rectangular, depending on the sign of its discriminant. 

\begin{Def}
A solution of equation (\ref{KdV}) with $D > 0$ is called 
orbitlike and wavelike, if $D < 0.$
The polynomial $P_3$ has multiple roots if and only if $D = 0.$ 
\end{Def}
\begin{Rem}
For given parameters $\mu$ and $\lambda$ consider the trajectories of solutions to equation (\ref{secondorder}) with different initial values. The trajectories of the constant solutions mark special points in the $(\kappa, \kappa')-$plane.  If the equation (\ref{secondorder}) possesses orbitlike solutions, then there exist three constant solutions and the trajectories of orbitlike solutions only wind around one  of these constant solutions, i.e., they lie in the orbit of the constant solution. Wavelike solutions always wind around all constant solutions of the equation. For $\lambda = 0$ the periodic solutions $\kappa$ changes the sign, thus the corresponding curves resemble waves.
\end{Rem}
A curve in $\C P^1$ with Schwarzian derivative $q$  solving equation (\ref{KdV}) can be parametrized in terms of Weierstrass $\zeta$ and $\sigma$ functions. 
The Weierstrass $\zeta-$function is determined by $\zeta' = - \wp$ and $\lim_{z \to 0}( \zeta(z) - \tfrac{1}{z}) = 0$ and the Weierstrass $\sigma$ function is given by $\tfrac{\sigma'}{\sigma} = \zeta$ and $\lim_{z \to \infty} \tfrac{\sigma(z)}{z} = 1.$
Again, we refer to \cite{AS} for the properties of these functions.
\begin{The}\label{curves}
Let $\tilde q = -2 \wp(x+ x_0) - \tfrac{1}{6}c$  be a solution of equation (\ref{KdV}) with real parameters $c, d, e$. We define a family of curves $\hat \gamma_E = (\hat\gamma^1_E, \hat\gamma^2_E)  \subset \C^2, $ $E \in \R$ by 
\begin{equation}
\begin{split}
\hat\gamma^1_E &=  \frac{\sigma(x+x_0 - \rho)}{\sigma(x+x_0)}e^{\zeta(\rho)(x+x_0)}\\
\hat\gamma^2_E &= \frac{\sigma(x+x_0 + \rho)}{\sigma(x+x_0)}e^{\zeta(-\rho)(x+x_0)}, \quad \text{with } \wp(\rho) = E.
\end{split}
\end{equation}
Then $\hat \gamma_E$
induces a family of curves $\gamma_E$ in $\C P ^1$ with Schwarzian derivative $q_E = (\tilde q + \tfrac{1}{6}c - E),$ if $E$ is not a branch point of $\wp.$ \end{The}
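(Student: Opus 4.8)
The plan is to verify directly, from the construction preceding Lemma \ref{Lemma1}, that the Schwarzian derivative of $\gamma_E$ equals $q_E = -2\wp(x+x_0) - E$, which is indeed $\tilde q + \tfrac16 c - E$ since $\tilde q + \tfrac16 c = -2\wp(x+x_0)$. By the definition of the Schwarzian derivative this splits into two tasks: first, that (after rescaling by a suitable constant) the lift satisfies the normalization $\det_{\C}(\hat\gamma_E, \hat\gamma_E') \equiv 1$; and second, that each component of $\hat\gamma_E$ solves the homogeneous linear equation $\psi'' + q_E\,\psi = 0$, i.e. the Lam\'e equation $\psi'' = (2\wp(x+x_0) + E)\,\psi$.

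For the second task I would abbreviate $u := x + x_0$ and write $\psi(u) = \sigma(u-\rho)\,\sigma(u)^{-1} e^{\zeta(\rho)u}$ for the first component. Since $\sigma'/\sigma = \zeta$, its logarithmic derivative is
$$\frac{\psi'}{\psi} = \zeta(u - \rho) - \zeta(u) + \zeta(\rho),$$
and differentiating once more with $\zeta' = -\wp$ gives
$$\frac{\psi''}{\psi} = \left(\frac{\psi'}{\psi}\right)' + \left(\frac{\psi'}{\psi}\right)^2 = -\wp(u-\rho) + \wp(u) + \big(\zeta(u-\rho) - \zeta(u) + \zeta(\rho)\big)^2.$$
The decisive step is the classical Weierstrass identity $\big(\zeta(a+b) - \zeta(a) - \zeta(b)\big)^2 = \wp(a) + \wp(b) + \wp(a+b)$, applied with $a = u-\rho$ and $b = \rho$, which turns the squared bracket into $\wp(u-\rho) + \wp(\rho) + \wp(u)$. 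The terms $\pm\wp(u-\rho)$ then cancel and, since $\wp(\rho) = E$, one is left with $\psi''/\psi = 2\wp(u) + E$, as required. The second component $\hat\gamma^2_E$ arises from the first by $\rho \mapsto -\rho$, and because $\wp$ is even the identical computation shows it solves the same equation.

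Since both components satisfy the same equation $\psi'' + q_E\,\psi = 0$, which has no first-order term, Liouville's formula already forces the Wronskian $W := \det_{\C}(\hat\gamma_E, \hat\gamma_E')$ to be constant in $x$; a short computation using the addition theorem for $\zeta$ together with the identity $\sigma(u-\rho)\,\sigma(u+\rho)\,\sigma(u)^{-2} = \sigma(\rho)^2\big(\wp(\rho)-\wp(u)\big)$ evaluates it to $W = \wp'(\rho)\,\sigma(\rho)^2$. This is nonzero exactly when $\wp'(\rho)\neq 0$, i.e. when $E = \wp(\rho)$ is not a branch point of $\wp$ — precisely the hypothesis of the theorem. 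Rescaling $\hat\gamma_E$ by the constant $W^{-1/2}$ normalizes the determinant to $1$ without altering the homogeneous linear equation satisfied by its components, so the rescaled $\hat\gamma_E$ is the normalized lift defining the Schwarzian derivative. Hence $\gamma_E$ has Schwarzian derivative $q_E = -2\wp(x+x_0) - E = \tilde q + \tfrac16 c - E$.

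I expect the main obstacle to be the bookkeeping in the two Weierstrass addition formulas (the linear one for $\psi'/\psi$ and the quadratic one for its square), in particular tracking signs and confirming the cancellation $-\wp(u-\rho) + \wp(u-\rho) = 0$; everything else is forced by the structure of second-order linear ODEs. A secondary point needing care is that $\sigma$ and $\zeta$ are not single-valued on $\C/\Gamma$, so the $\hat\gamma^i_E$ are only quasi-periodic, but this is harmless here because $\gamma_E$ is merely a curve in $\C P^1$ whose lift $\hat\gamma_E$ to $\C^2$ is not required to be closed.
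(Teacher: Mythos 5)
Your proof is correct and follows essentially the same route as the paper's: the paper's proof likewise rests on the fact that both components satisfy the Lam\'e equation $(\hat\gamma^i_E)'' - 2\wp(x+x_0)\,\hat\gamma^i_E = E\,\hat\gamma^i_E$ (which it asserts without computation) together with the linear independence of the two components, from which the Schwarzian derivative $q_E = \tilde q + \tfrac16 c - E$ is read off. Your explicit derivation via the identity $\bigl(\zeta(a+b)-\zeta(a)-\zeta(b)\bigr)^2 = \wp(a)+\wp(b)+\wp(a+b)$ and the Wronskian evaluation $W = \wp'(\rho)\,\sigma(\rho)^2$ merely fills in the classical details --- including the normalization $\det_{\C}(\hat\gamma_E,\hat\gamma_E')=1$ and why the hypothesis that $E$ is not a branch point of $\wp$ is needed --- that the paper leaves implicit.
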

\begin{Rem}
The parameter $\rho$ is determined by $E$ only up to sign. The choice of $-\rho$ (instead of $\rho$) exchanges $\gamma^1_E$ and $\gamma^2_E$ and the resulting curves in $\C P^1$ are M\"obius equivalent.
\end{Rem}
\begin{proof}
 If $E$ is not a branch point of $\wp,$ the functions $\hat\gamma^i_E,$  $i= 1,2$ are linearly independent over $\C$ and have no common poles and zeros, thus the curve $(\gamma^1_E, \gamma^2_E)$ induces a well-defined curve in $\C P^1.$  
Further, since
$$(\hat\gamma_E^i)'' - 2 \wp(x + x_0) \hat\gamma_E^i = E \hat\gamma_E^i,$$
the stated $q_E$ is  the Schwarzian derivative of the curve $\gamma_E = [\gamma^1_E, \gamma^2_E].$
\end{proof}
\begin{Lem}\label{compa}
Let $g_2$ and $g_3$ be real constants with $g_2^3 - 27g_3^2 \neq 0.$ And let $\wp$ be the Weierstrass function with respect to the lattice $\Gamma \subset \C$ given by the lattice  invariants $g_2$ and $g_3$. If $x_0\in \C \setminus (\tfrac{1}{2}\Gamma + \R)$, then there exist a function $\kappa: \R \rightarrow \R$ with 
\begin{equation}\label{statement}\wp(x + x_0)= - i \frac{\kappa'(x)}{4} -  \frac{\kappa(x)^2}{8} - b,\end{equation}
where $b$ is a real constant.
Moreover, $\kappa$ is periodic and a stationary mKdV solution with coefficients determined by $g_2,$ $g_3$.
\end{Lem}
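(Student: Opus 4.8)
The plan is to solve the Riccati-type relation~(\ref{statement}) for a real function $\kappa$ directly, exploiting the conjugation symmetry forced by the reality of the invariants. Since $g_2,g_3\in\R$, the Laurent expansion of $\wp$ at the origin has real coefficients, so $\overline{\wp(z)}=\wp(\bar z)$, $\overline{\wp'(z)}=\wp'(\bar z)$ and $\overline{\zeta(z)}=\zeta(\bar z)$. Writing $z=x+x_0$ and $\bar z=x+\bar x_0$ for real $x$, this gives $\Re\wp(x+x_0)=\tfrac12(\wp(z)+\wp(\bar z))$, $\Re\wp'(x+x_0)=\tfrac12(\wp'(z)+\wp'(\bar z))$ and $\Im\wp(x+x_0)=\tfrac1{2i}(\wp(z)-\wp(\bar z))$. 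The hypothesis $x_0\notin\tfrac12\Gamma+\R$ is used to guarantee that the horizontal line $\R+x_0$ meets neither $\Gamma$ (so that $\wp(x+x_0)$ is smooth in $x$) nor the locus on which $\wp$ is real (so that $\Im\wp(x+x_0)\not\equiv0$ and the resulting $\kappa$ is genuinely non-constant).

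Taking imaginary parts of~(\ref{statement}) forces $\kappa'(x)=-4\,\Im\wp(x+x_0)=2i(\wp(z)-\wp(\bar z))$, so I would define $\kappa$ by integrating this relation, using $\zeta'=-\wp$:
\[
\kappa(x)=2i\bigl(\zeta(x+\bar x_0)-\zeta(x+x_0)\bigr)+C .
\]
By the conjugation symmetry this is real for every real constant $C$, and it is the unique candidate up to the additive constant $C$. Periodicity is then immediate: along the real period $2\omega_1$ of $\Gamma$ (or of its rectangular double cover when $D<0$) the quasi-periodicity of $\zeta$ gives $\int_0^{2\omega_1}\wp(t+x_0)\,dt=-2\eta_1$ with $\eta_1=\zeta(\omega_1)\in\R$, whence $\int_0^{2\omega_1}\Im\wp(t+x_0)\,dt=0$ and $\kappa$ is $2\omega_1$-periodic.

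The heart of the argument, and the step I expect to be the main obstacle, is showing that with the correct $C$ the real part of~(\ref{statement}) holds as well, i.e. that $\Re\wp(x+x_0)+\tfrac18\kappa(x)^2$ is a real constant, which I name $-b$. Differentiating, this is equivalent to the pointwise identity $\Re\wp'(x+x_0)=\kappa\,\Im\wp(x+x_0)$. Substituting the closed form of $\kappa$ and the conjugate-shift expressions reduces it to an identity among $\zeta,\wp,\wp'$ evaluated at $z$ and $\bar z$. Here I would invoke the addition formulas $\zeta(u\pm v)=\zeta(u)\pm\zeta(v)+\tfrac12\frac{\wp'(u)\mp\wp'(v)}{\wp(u)-\wp(v)}$; the one for $\zeta(u-v)$, with $u=z$ and $v=\bar z$, rewrites $\tfrac12(\wp'(z)+\wp'(\bar z))$ as $[\zeta(z-\bar z)-\zeta(z)+\zeta(\bar z)](\wp(z)-\wp(\bar z))$, and after cancelling the common factor $\wp(z)-\wp(\bar z)$ the required identity collapses to $\zeta(z-\bar z)=\tfrac{C}{2i}$. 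Since $z-\bar z=x_0-\bar x_0$ is independent of $x$, this simultaneously forces $C=2i\,\zeta(x_0-\bar x_0)$—which is real, because $x_0-\bar x_0$ is purely imaginary and $\zeta$ is odd with real invariants—and, with that value, makes the identity hold for all $x$. This is exactly where the special structure of $\wp$ is essential; the only real risk is sign bookkeeping in the addition theorems.

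Once~(\ref{statement}) is in hand, the remaining assertions follow quickly. Comparing~(\ref{statement}) with the Miura form $q=\tfrac{i\kappa'}2+\tfrac{\kappa^2}4+\tfrac G4=-2\wp(x+x_0)-\tfrac c6$ of Lemma~\ref{Lemma1} fixes $b=\tfrac G8+\tfrac c{12}$. Substituting $\wp=-\tfrac i4\kappa'-\tfrac18\kappa^2-b$ and $\wp'=-\tfrac i4\kappa''-\tfrac14\kappa\kappa'$ into $\wp'^2=P_3(\wp)$ and separating real and imaginary parts—this is precisely the computation of Lemma~\ref{Lemma1} run in reverse, now legitimate because $\kappa$ is real—returns the once-integrated stationary mKdV equation~(\ref{P4}). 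Its parameters $\mu,\lambda,\nu$ are the preimages of $c,d,e$, hence of $g_2,g_3$, under the dictionary of Lemma~\ref{Lemma1}, so they are determined by $g_2$ and $g_3$ as claimed.
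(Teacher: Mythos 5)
Your proposal is correct, and while it opens with the same move as the paper---integrating the imaginary part of (\ref{statement}) to get $\kappa(x)=2i\bigl(\zeta(x+\bar x_0)-\zeta(x+x_0)\bigr)+C$, which is exactly the paper's $\kappa=-2i(\zeta-\bar\zeta+const_1)$---it handles the crux, the real part, by a genuinely different mechanism. The paper never invokes the addition theorem: it differentiates (\ref{1}) to get $\wp''=6\wp^2-\tfrac12 g_2$, combines the two ODEs into the identity $2(\bar\wp-\wp)=\tfrac{\wp''+\bar\wp''}{\bar\wp-\wp}+\tfrac{(\wp')^2-(\bar\wp')^2}{(\bar\wp-\wp)^2}$, and integrates twice, so its constant $const_1$ is left as an implicitly determined integration constant. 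You instead differentiate the real part once, reduce it to the pointwise identity $\Re\wp'(x+x_0)=\kappa\,\Im\wp(x+x_0)$, and verify it via the $\zeta$-addition formula; this buys the explicit closed form $C=2i\,\zeta(x_0-\bar x_0)$ (real, as you check), which the paper's route never produces. Your sign bookkeeping in $\zeta(u-v)=\zeta(u)-\zeta(v)+\tfrac12\tfrac{\wp'(u)+\wp'(v)}{\wp(u)-\wp(v)}$ is right, and the reduction is reversible, so the identity does hold for all $x$ with that $C$. The stationarity step also differs: the paper takes the imaginary part of $\wp''=6\wp^2-\tfrac12 g_2$ and lands directly on $\kappa'''+\tfrac32\kappa^2\kappa'+12b\kappa'=0$ with no division, whereas your route through $\wp'^2=P_3(\wp)$ delivers the first-order equation (\ref{P4}) but tacitly requires dividing the imaginary part by $\kappa'$ and eliminating $\kappa''$ between the two resulting real equations---legitimate, since $\kappa$ is non-constant real-analytic (if $\Im\wp(x+x_0)\equiv 0$ then $\wp(w+x_0)-\wp(w+\bar x_0)$ vanishes identically, forcing $\bar x_0-x_0\in\Gamma$, excluded by the hypothesis), and consistent with the paper's remark that $\lambda$ is only determined up to sign; the paper's version is cleaner, yours is stronger in that it yields the once-integrated form with the coefficient dictionary. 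One small inaccuracy: the line $\R+x_0$ does meet the real locus of $\wp$, at the isolated points where $x+\Re x_0\in\omega_1\Z$ (there $z\equiv-\bar z$ mod $\Gamma$), so your parenthetical should only claim $\Im\wp(x+x_0)\not\equiv 0$; correspondingly, your cancellation of the factor $\wp(z)-\wp(\bar z)$ is valid off a discrete set and extends by continuity---the same caveat the paper records by restricting to points where $\wp-\bar\wp\neq 0$.
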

\begin{proof}
We first show that there exists a real valued function $\kappa$ solving the differential equation (\ref{statement}). The imaginary part of (\ref{statement}) can be easily integrated and we obtain
\begin{equation}\label{kappazeta}
\kappa := -2 i ( \zeta - \bar \zeta + const_1).
\end{equation}

\noindent
Then the real part of equation (\ref{statement}) must satisfy
$$ \wp + \bar \wp =  (  \zeta - \bar \zeta + const_1)^2 - 2b,$$
which can be proved as follows: Differentiating equation (\ref{1}) we obtain
\begin{equation}\label{2}
\wp''(x+ x_0) = 6 \wp(x+x_0)^2 - \tfrac{1}{2}g_2.
\end{equation}

\noindent
Further, since the functions $\wp$ and $\bar \wp$ are holomorphic and anti-holomorphic, respectively, we get that  the  derivative  of  $\wp$ with respect to $z = x+iy$ and the derivative of $\bar \wp$ with respect to $\bar z$ is the same as the derivative of $\wp$ and $\bar \wp$ with respect to $x.$  Consider now only the points $z \in \C /\Gamma $ with $\wp-\bar \wp \neq 0.$ Then by (\ref{1}) and (\ref{2}) we have
 \begin{equation*}
2 (\bar \wp - \wp)^3 = (\wp'' + \bar \wp'')(\bar \wp - \wp) + (\wp')^2 - (\bar \wp')^2 .
\end{equation*}
This is equivalent to 
\begin{equation*} 
2 (\bar \wp - \wp) = \frac{\wp'' + \bar \wp''}{\bar \wp - \wp} + \frac{(\wp')^2 - (\bar \wp')^2}{(\bar \wp - \wp)^2}.
\end{equation*}
By integration we get
\begin{equation*}
 2(\zeta - \bar \zeta + const_1) = \frac{\wp' + \bar \wp'}{\bar \wp - \wp}, 
 \end{equation*}
 with a purely imaginary integration constant $const_1.$ Thus
 \begin{equation*}
\wp' + \bar \wp' = 2 (\bar \wp - \wp)(\zeta - \bar \zeta + const_1).
\end{equation*}
Integrate again we obtain
\begin{equation*}
\begin{split}
\wp + \bar \wp& =  (( \zeta - \bar \zeta) + const_1)^2 + const_2,
\end{split}
\end{equation*}
with a real integration constant $const_2.$ Then replacing $\wp$ by $\wp(x + x_0)$ and define $b = - \tfrac{1}{2}const_2$ proves the first statement.\\

Since all the functions we consider are continuous the equation above is still valid at the boundaries in the $x-$direction. 
 Thus it is necessary to choose a $x_0$ which does not lie on the real axis or on a parallel translate of the real axis by a half lattice point. These choices of $x_0$ does not lead to an arclength parametrized constrained elastic curve, since $q$ would be real valued. \\

Now we show that $\kappa$ defined by equation (\ref{kappazeta}) is mKdV stationary. We have
$\wp = - i \frac{\kappa'(x)}{4} -  \frac{\kappa(x)^2}{8} - b$  and therefore
\begin{equation*}
\begin{split}
\wp(x+x_0)'' &=  - i \frac{1}{4} \kappa'''(x)-  \frac{1}{4}\kappa''(x)\kappa(x) -  \frac{1}{4} (\kappa'(x))^2\\
6 \wp(x+x_0)^2 &=  \frac{3i}{8} \kappa' \kappa^2 +  3ib\kappa'  - \frac{3}{8}\kappa'^2+  \frac{3}{32}\kappa^4 +  6b^2 +  \frac{3}{2}b\kappa^2.
\end{split}
\end{equation*}
Hence the imaginary part of equation (\ref{2}) yields
\begin{equation}
\begin{split}
  \kappa''' + \frac{3}{2}\kappa'\kappa^2 + 12b\kappa' = 0.\end{split}
\end{equation}
Thus $\kappa$ is the curvature of a arclength parametrized constrained elastic curve. 
\end{proof}
\begin{Rem}
Lemma \ref{compa} shows that the curve $\gamma_E$ with Schwarzian derivative $q$ defined in Theorem \ref{curves} is M\"obius equivalent to an arc length parametrized constrained elastic curve $\gamma$ in a $2-$dimensional space form. We fix the M\"obius transformation in section \ref{spaceform}.
\end{Rem}

\subsection{The roots of the polynomials $P_3$ and $P_4$}
Since we want to consider closed curves, the curvature function $\kappa$ is periodic and achieves its maximum and minimum. Thus we can always  choose $\kappa'(0) = 0$ as the initial value for the equation (\ref{secondorder}). 
This corresponds to the choice of $x_0 \in i \R\setminus \{ \tfrac{1}{2} \Gamma\}.$ 
The necessary and sufficient condition  for the existence of a real function $\kappa$ solving equation (\ref{P4}) with parameters $\mu,$ $\lambda$ and $\nu$ is that the polynomial $P_4$ has real roots. 
In the case of a $4-$th order polynomial there exists an algorithm to compute its roots explicitly. To $P_4$ one associate a polynomial of degree $3$ - the cubic resolvent. In our case it is  given by 
$$\tilde P_3 = s^3 + 8(\mu + G)s^2 + 16((\mu + G)^2 -\nu)s - 64 \lambda^2$$
By a variable change $16x  =  s + \tfrac{8}{3}(\mu + G) $, we obtain a positive multiple of the  polynomial $P_3.$ The roots of $P_4$ are determined by the roots of $\tilde P_3$ (respectively $P_3$). In particular, $P_4$ has simple real roots if and only if  $\tilde P_3$ has either only one real root ($D<0$ and $P_4$ has $2$ real roots) or the roots of $\tilde P_3$ are all real and non-negative ($D>0$ and $P_4$ has $4$ real roots). Further, if $\tilde P_3$ has multiple roots, then also $P_4$ has multiple roots. This yields the following lemma.

 \begin{Lem}\label{positive}
Let $P_4$ be the real polynomial of degree $4$ given in (\ref{P4}) with only simple roots  and let $\tilde P_3$ denote its cubic resolvent. Then $P_4$ has real roots if and only if   all real roots of $\tilde P_3$ are  non-negative.
\end{Lem}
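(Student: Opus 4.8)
The plan is to reduce the statement to the classical theory of the resolvent cubic, in a form that makes the signs of the resolvent roots transparent. First I would pass to the monic depressed quartic $4P_4 = x^4 + 4(\mu+G)x^2 + 8\lambda x + 4\nu$ (note that $P_4$ has no cubic term) and factor it over $\C$ as a product of two monic quadratics
\[
4P_4 = (x^2 + t x + u)(x^2 - t x + v).
\]
Comparing coefficients gives $u+v = 4(\mu+G)+t^2$, $v-u = 8\lambda/t$ and $uv = 4\nu$; eliminating $u,v$ shows that $s := t^2$ must satisfy exactly the cubic $\tilde P_3(s)=0$. Hence $\tilde P_3$ is the resolvent cubic of $4P_4$, and if $\alpha_1,\dots,\alpha_4$ denote the roots of $P_4$ then, using $\sum_i \alpha_i = 0$, its three roots are the squared pair sums
\[
s_1 = (\alpha_1+\alpha_2)^2,\quad s_2=(\alpha_1+\alpha_3)^2,\quad s_3=(\alpha_1+\alpha_4)^2,
\]
one for each splitting of the four roots into two pairs. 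A short computation using $\sum_i\alpha_i=0$ gives $s_i-s_j = \pm(\alpha_k-\alpha_l)(\alpha_m-\alpha_n)$ for the complementary indices, so the discriminant of $\tilde P_3$ equals $\prod_{i<j}(\alpha_i-\alpha_j)^2$, the discriminant of $P_4$. Thus the sign of $D$, a positive multiple of both discriminants, governs the root patterns of $P_4$ and $\tilde P_3$ simultaneously, and $D\neq 0$ since $P_4$ has simple roots.

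Next I would run the case analysis by the sign of $D$. If $D<0$, then $P_4$ has exactly two real roots together with a complex-conjugate pair, so $P_4$ always has real roots; correspondingly $\tilde P_3$ has a single real root, and taking $\alpha_1,\alpha_2$ to be the two real roots (equivalently the conjugate pair $\alpha_3,\alpha_4$, whose sum is still real) shows this root equals $(\alpha_1+\alpha_2)^2\ge 0$. Hence both sides of the equivalence hold. If $D>0$, then $\tilde P_3$ has three real roots and $P_4$ has either four real or four non-real roots. In the four-real case every pair sum is real, so all $s_i=(\alpha_i+\alpha_j)^2\ge 0$ and $P_4$ has real roots. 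In the four-non-real case the roots form two conjugate pairs; labelling so that $\alpha_2=\bar\alpha_1$ and $\alpha_4=\bar\alpha_3$, the pairing $\{1,2\},\{3,4\}$ gives real pair sums, while $\sum_i\alpha_i=0$ forces $\Re\alpha_1+\Re\alpha_3=0$, so the mixed pair sums $\alpha_1+\alpha_3$ and $\alpha_1+\alpha_4$ are purely imaginary. Therefore $s_1\ge 0$ but $s_2,s_3\le 0$ (generically strictly negative), so not all real roots of $\tilde P_3$ are non-negative and $P_4$ has no real roots. Combining the branches yields the claimed equivalence.

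I expect the only real subtlety to be the sign bookkeeping in the $D>0$, four-non-real-roots subcase: one must use the vanishing of the coefficient of $x^3$ (equivalently $\sum_i\alpha_i=0$) to see that for the two mixed pairings the pair sum $\alpha_i+\alpha_j$ is purely imaginary, so that $s=(\alpha_i+\alpha_j)^2$ is non-positive rather than an arbitrary real number. Everything else is the standard dictionary between the discriminant sign of a real quartic and its real-root count, together with the identification of $\tilde P_3$ as the resolvent cubic of $4P_4$ established in the first step.
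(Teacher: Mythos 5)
Your proof is correct, and it takes a genuinely more self-contained route than the paper's. The paper splits on the sign of $D$ and treats the two cases with different tools: for $D>0$ it simply invokes the classical quartic--resolvent dictionary stated just before the lemma (calling the claim ``obviously true''), while for $D<0$ it exploits the special constant term, $\tilde P_3(0)=-e_1e_2e_3=-64\lambda^2\le 0$; since the complex-conjugate pair contributes $e_2e_3=|e_2|^2>0$, the unique real root satisfies $e_1\ge 0$, and a real quartic with $D<0$ automatically has two real roots, so both sides of the equivalence hold. You instead reprove the dictionary itself: you identify $\tilde P_3$ as the resolvent by factoring $4P_4$ into two quadratics, represent its roots as the squared pair sums $(\alpha_i+\alpha_j)^2$ using the vanishing of the cubic term, and match the discriminants of $P_4$ and $\tilde P_3$, after which both signs of $D$ are handled uniformly --- in particular you supply a proof of the $D>0$ case that the paper delegates to the literature, and your argument does not use the specific coefficients of this family beyond depressedness. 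Two points to tighten, neither a real gap: first, eliminating $u,v$ divides by $t$, so the identification of $\tilde P_3$ with the resolvent should be completed at $t=0$ (i.e.\ $\lambda=0$) either by the direct symmetric-function computation of the monic cubic with roots $(\alpha_i+\alpha_j)^2$ or by continuity; second, in the $D>0$ no-real-roots subcase a mixed pair sum can genuinely vanish (e.g.\ roots $\pm a\pm bi$ give $\alpha_1+\alpha_3=0$), so ``generically strictly negative'' should be replaced by the observation that $s_2=s_3=0$ is impossible because the resolvent inherits simple roots from $P_4$ via your discriminant identity, whence at least one real root of $\tilde P_3$ is strictly negative and the equivalence follows.
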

\begin{proof} 
The statement is obviously true for $D>0.$ For $D<0$ let $e_1$, $e_2$ and $e_3$ denote the roots of $\tilde P_3$.
Then the cubic resolvent can be written as $\tilde P_3(s) = (s-e_1)(s-e_2)(s-e_3)$.
We obtain in our particular case that
$$\tilde P_3(0)  = - e_1 e_2 e_3 = - 64 \lambda^2 \leq 0.$$ 
For $D < 0$ there is only $1$ real root and a pair of complex conjugate roots of $P_3.$ Therefore the real root must be non-negative.
\end{proof}
\begin{Rem}
The proof shows that for given $g_2$ and $g_3$ and $(\mu + G)$ the parameter $\lambda$ is fixed up to sign. The choice of the sign corresponds to the transformation $\kappa \mapsto -\kappa$ or equivalently $x_0 (\in i\R) \mapsto -x_0.$
\end{Rem}

\begin{Cor}\label{g3}
The stationary mKdV equation (\ref{P4}) with real parameters $(\mu + G),$ $\lambda$ and $\nu$ has real solutions if and only if $\tfrac{1}{6} (\mu + G)$ is  less or equal to  all real roots of the polynomial $P_3.$ 
Equality holds if and only if $\lambda = 0.$
\end{Cor}
\begin{Cor}
There exist no orbitlike free elastic curves on $S^2$. Further, there are no orbitlike elastic curves corresponding to Willmore Hopf tori.\end{Cor}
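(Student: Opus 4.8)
\section*{Proof proposal}

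The plan is to reduce both assertions to Corollary \ref{g3}, using only the elementary fact that in the orbitlike regime the cubic $P_3$ has a strictly negative root. The substantive preliminary step is to pin down the parameters $(\mu,\lambda)$ attached to each of the two geometric situations; once that is done the rest is formal.

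First I would record these parameters on $S^2$, where $G = 1$. A free elastic curve is by definition a solution of (\ref{secondorder}) with $\mu = \lambda = 0$, so that $\tfrac16(\mu+G) = \tfrac16 > 0$. For a Willmore Hopf torus the profile curve is an \emph{unconstrained} critical point of the reduced energy $\int_\gamma(\kappa^2+1)\,ds$ on $S^2$. Writing this functional as $\int_\gamma \kappa^2\,ds + L$ and comparing its Euler--Lagrange equation with (\ref{secondorder}), the absence of an area term forces $\lambda = 0$, while the additional length term only shifts the coefficient of $\kappa$, producing $\mu = \pm\tfrac12$ (the sign reflecting orientation conventions). Since $|\mu| = \tfrac12 < 1 = G$, I again obtain $\lambda = 0$ and $\tfrac16(\mu+G) > 0$. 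Thus in both cases the two relevant facts are $\lambda = 0$ and $\tfrac16(\mu+G) > 0$.

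Next I would invoke Corollary \ref{g3}. Since $\lambda = 0$, the equality case of that corollary applies and $\tfrac16(\mu+G)$ is one of the real roots of $P_3$; moreover the existence of a real curvature function $\kappa$ requires $\tfrac16(\mu+G)$ to be less than or equal to \emph{every} real root of $P_3$. Together these force $\tfrac16(\mu+G)$ to be the smallest real root. It therefore suffices to prove that in the orbitlike case, i.e.\ $D = g_2^3 - 27g_3^2 > 0$, the smallest root of $P_3$ is strictly negative, for this contradicts $\tfrac16(\mu+G) > 0$ and leaves no admissible curve.

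This last step I would settle by Vieta's relations: since $P_3 = 4x^3 - g_2 x - g_3$ has no quadratic term, its three roots sum to zero, and since $D > 0$ they are real and distinct. Three distinct reals summing to zero cannot all be non-negative, so the smallest is strictly negative, completing the contradiction. I expect the only genuine subtlety to lie in the first step, namely verifying that the Willmore Hopf case really does have $\lambda = 0$ together with $\mu + G > 0$ on $S^2$; the root count is routine.
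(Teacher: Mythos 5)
Your proposal is correct, and it reaches the contradiction by a cleaner route than the paper. The paper's own proof works with the cubic resolvent $\tilde P_3$ of $P_4$: it notes $D>0$ forces $g_2>0$, argues via Lemma \ref{positive} (and a footnote) that real solutions require the critical points of $\tilde P_3$, i.e.\ the roots of $3s^2+16(\mu+G)s+16(\mu+G)^2-16\nu$, to be positive, computes this as $-\sqrt{\tfrac{64}{3}g_2}\geq \tfrac{8}{3}(\mu+G)$, and concludes $(\mu+G)<0$ --- contradicting $\mu+G=G>0$ (free elastic) and $\mu+G=\tfrac12 G>0$ (Willmore Hopf). You instead invoke Corollary \ref{g3} directly and then observe that $P_3=4x^3-g_2x-g_3$ has vanishing quadratic coefficient, so its roots sum to zero; since $D>0$ makes them real and distinct, the smallest is strictly negative, and $\tfrac16(\mu+G)\leq\min(\text{roots})<0$ is already the contradiction. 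Since Corollary \ref{g3} is exactly Lemma \ref{positive} transported through the change of variables $16x=s+\tfrac83(\mu+G)$, the two arguments rest on the same foundation, but your Vieta step replaces the paper's critical-point computation and makes transparent that the only geometric inputs are $\lambda=0$ and $\mu+G>0$. Two small remarks: you do not actually need the equality case of Corollary \ref{g3} --- the inequality $\tfrac16(\mu+G)\leq$ all real roots suffices, so your appeal to $\lambda=0$ there is dispensable (though $\lambda=0$ is of course part of identifying the parameters); and for Willmore Hopf tori the correct sign is $\mu=-\tfrac12$ when $G=1$ (the paper has $\mu+G=\tfrac12 G$, consistent with the figure caption), but your hedge $\mu=\pm\tfrac12$ is harmless since only $\mu+G>0$ enters the argument.
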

\begin{proof} 
Firstly, it requires $g_2 > 0$ to have $D>0.$ Further, the condition for the existence of real solutions is equivalent to the condition that the roots of $\frac{\partial \tilde P_3}{\partial s} = 3 s^2 + 16(\mu + G)s + 16 (\mu + G)^2 -16 \nu $ are positive\footnote{If the maximum and the minimum of the polynomial are positive, then at least $2$ roots must be positive. But Since the product of all roots is also non-negative by lemma \ref{positive}, the third root is non-negative.}. This condition is computed to be
$$-\sqrt{\frac{64}{3}g_2} \geq \tfrac{8}{3} (\mu + G),$$
which is equivalent to 
$$(\mu + G) < 0 \text{ and } \nu \leq (\mu + G)^2.$$
But for free elastic curves in $S^2$ we have: $G > 0,$ and $\lambda = \mu = 0$ and
for Willmore Hopf tori we have : $G > 0$, $\lambda = 0$ and  $(\mu + G) = \tfrac{1}{2}G > 0.$
\end{proof}

\subsection{Multiple roots}\label{multiple roots}
We have shown that in the case where the polynomial $P_4$ has only simple roots the equation (\ref{KdV}) can be solved using the Weierstrass $\wp-$function. Now we study the case where $P_4$ has multiple roots. \\

Since we are looking  for periodic solutions, we can restrict ourselves without  loss of generality  to the initial value problem for equation (\ref{secondorder}) with initial values 
$$\kappa(0) = \kappa_0 \quad \text{ and } \quad \kappa'(0) = 0.$$
Then $\kappa_0$ is a real root of $P_4$ with parameters $\lambda,$ $\mu$ and $\nu.$ There are two cases to consider.
In the first case $\kappa_0$ is a multiple zero of $P_4$ itself. Then it is also a root of $\frac{\del P_4}{\del \kappa}$, which is the right hand side of equation (\ref{secondorder}). Therefore $\kappa \equiv  \kappa_0$ is the unique solution to the given initial value problem by Picard-Lindel\"off. \\
In the second case $P_4$ has multiple roots but $\kappa_0$ is a simple root of $P_4$.

\begin{Def}
A solution of equation (\ref{secondorder}) (or of equation (\ref{KdV})), where $P_4$ has multiple roots and the initial condition $\kappa_0$ is a simple root is called an asymptotic solution.
\end{Def}
\begin{Pro}
Asymptotic solutions with $\lambda=0$ are never periodic.  
\end{Pro}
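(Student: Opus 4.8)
The plan is to analyze the orbit in the phase plane of the once-integrated equation (\ref{P4}), i.e.\ of $(\kappa')^2 = -P_4(\kappa)$, and to exploit the special structure forced by the hypothesis $\lambda = 0$. When $\lambda = 0$ the polynomial $P_4(x) = \tfrac14 x^4 + (\mu+G)x^2 + \nu$ is \emph{even}, so its roots are symmetric about the origin and multiplicities are preserved under $x \mapsto -x$. This symmetry is the whole engine of the proof.

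First I would pin down the possible root configuration. We are in the asymptotic case of the Definition above, so $P_4$ has a multiple root while the initial value $\kappa_0$ is a simple root. A double (or higher) root $r \neq 0$ would force $-r$ to be a root of the same multiplicity, which already accounts for all four roots and leaves no simple root; a triple or quadruple root is likewise incompatible with evenness unless it sits at the origin, in which case no simple real root survives either. Hence the only admissible possibility is a double root at $r = 0$, which requires $\nu = 0$ and yields $P_4(x) = \tfrac14 x^2\bigl(x^2 + 4(\mu+G)\bigr)$. The existence of a simple \emph{real} root then forces $\mu + G < 0$, with simple roots at $\pm 2\sqrt{-(\mu+G)}$. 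The decisive consequence is that the double root $0$ lies \emph{strictly between} the two simple roots.

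Next I would rule out periodicity. A bounded solution of $(\kappa')^2 = -P_4(\kappa)$ is periodic precisely when $\kappa$ oscillates between two consecutive simple roots across which $-P_4 > 0$ on the open interval, the finiteness of the transit time being guaranteed by the square-root (simple-root) behaviour at the turning points. Here, however, the admissible interval $[-2\sqrt{-(\mu+G)},\,2\sqrt{-(\mu+G)}]$ contains the double root $0$ in its interior, where $-P_4$ vanishes. Starting at a simple root, $\kappa$ runs monotonically toward $0$, but the arclength needed to reach it,
$$\int \frac{d\kappa}{\sqrt{-P_4(\kappa)}},$$
diverges: near $\kappa = 0$ one has $\sqrt{-P_4(\kappa)} = \tfrac12|\kappa|\sqrt{-4(\mu+G)-\kappa^2} \sim C|\kappa|$, so the integrand has a non-integrable logarithmic singularity. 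Thus $\kappa \to 0$ only as the parameter tends to $\pm\infty$; the orbit is a solitary (homoclinic) trajectory. In fact it integrates explicitly to $\kappa(x) = 2\sqrt{-(\mu+G)}\,/\cosh\bigl(\sqrt{-(\mu+G)}\,(x-x_0)\bigr)$, which is manifestly non-periodic and serves as a sanity check.

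The main obstacle is not the divergence estimate, which is routine, but isolating \emph{why the hypothesis $\lambda = 0$ is essential}. For $\lambda \neq 0$ the polynomial $P_4$ loses its symmetry and may carry a double root lying \emph{outside} the oscillation interval bounded by two simple roots; such an orbit never approaches the double root and is genuinely periodic, so the statement would fail. The real content of the proof is therefore exactly the parity argument of the first step, which guarantees that when $\lambda = 0$ the double root is trapped between the simple roots and hence unavoidably obstructs the orbit from closing up.
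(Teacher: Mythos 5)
Your proof is correct, and its first half (the parity classification) coincides exactly with the paper's: evenness of $P_4$ when $\lambda=0$ forces the multiple root to sit at $\kappa=0$ with multiplicity two (so $\nu=0$), flanked by the simple roots $\pm 2\sqrt{-(\mu+G)}$ with $\mu+G<0$. Where you genuinely diverge is in the mechanism for non-periodicity. The paper argues qualitatively: starting at the simple root $\kappa_0$, it shows $\kappa''(0)<0$, lets $T$ be the supremum of times on which $\kappa'<0$, and derives a contradiction if $T<\infty$ --- for then $\kappa(T)$ would be the double root $0$, and Picard--Lindel\"of uniqueness for the second-order equation would force $\kappa\equiv 0$, contradicting $\kappa(0)=\kappa_0\neq 0$; hence $\kappa$ is strictly monotone forever. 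You instead quantify the same obstruction by quadrature: the transit time $\int d\kappa/\sqrt{-P_4(\kappa)}$ to the interior double root diverges because $\sqrt{-P_4(\kappa)}\sim C|\kappa|$ near $0$, so the orbit is homoclinic, and you confirm this with the explicit solution $\kappa(x)=2\sqrt{-(\mu+G)}\,/\cosh\bigl(\sqrt{-(\mu+G)}\,(x-x_0)\bigr)$, which indeed solves $(\kappa')^2=\tfrac14\kappa^2\bigl(-4(\mu+G)-\kappa^2\bigr)$. The two mechanisms are logically interchangeable here; yours buys more (an explicit formula and the asymptotic rate $\kappa\to 0$, which is where the name ``asymptotic solution'' comes from), while the paper's uniqueness argument is softer and needs no integrability analysis at the turning point. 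One phrasing quibble: the singularity of the integrand is of order $1/|\kappa|$ (the \emph{integral} diverges logarithmically, the integrand is not ``logarithmic''), and strictly one should note that a non-real multiple root is excluded because it would consume all four roots in conjugate pairs, leaving no real simple root $\kappa_0$ --- a one-line addendum to your parity argument. Your closing observation that for $\lambda\neq 0$ a double root can lie outside the oscillation interval, permitting periodic asymptotic solutions, is consistent with the paper's subsequent remark that closed asymptotic solutions do exist on $S^2$.
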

\begin{proof}
For $\lambda=0$ we have the differential equation
$$(\kappa')^2  = - \tfrac{1}{4}\kappa^4 -  2(\mu + G)\kappa^2 - \nu.$$
The polynomial on the right hand side is even and has multiple roots by assumption. In order to obtain non constant solutions we need at least $1$ simple root of $P_4.$ 
By symmetry the only case to consider is that the multiple root of $P_4$ is at 
$\kappa = 0$ with multiplicity $2$ and we have $2$ simple roots for $\kappa = \pm \kappa_0,$ and $\kappa_0 \in \R_+.$\\

We solve an initial value problem for the differential equation of second order
$$\kappa'' + \tfrac{1}{2}\kappa^3 + (\mu + G)\kappa = 0,$$
with initial value $\kappa(0)= \kappa_0$ and $\kappa'(0) = 0.$
At $\kappa(0)$ we obtain that $\kappa''(0) = \frac{\del (\kappa')^2}{\del \kappa}|_{x = 0} < 0$. Thus there exist an $\epsilon >0$ with $\kappa'(t) < 0 $ for $t \in (0, \epsilon)$ and  the curvature function $\kappa$ decreases monotonically for $t \in (0, \epsilon).$ Let $T: =$ sup$\{\epsilon \in \R_+ | \kappa'(t) < 0$ for $t \in (0, \epsilon)\}$. If $T< \infty,$ then $\kappa'(T)= 0$  and we obtain $\kappa(T)$ is a root of $P_4$. Since $\kappa$ is continous, we obtain
$\kappa(T)=0,$ which is a multiple root. By Picard-Linderl\"off we get that $\kappa(t) \equiv 0$ is the unique solution to the initial value problem $\kappa'(T) = \kappa(T) = 0.$ This contradicts   $\kappa(0)= \kappa_0 \neq 0.$ Therefore  $T = \infty$ and $\kappa$ is not periodic.
\end{proof}
\begin{Cor}
Constrained Willmore tori of revolution and Willmore Hopf tori  are either homogenous, i.e., $\kappa \equiv \kappa_0$ is constant,  or $P_4$ has only simple roots. 
\end{Cor}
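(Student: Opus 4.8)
The plan is to reduce both families to the single observation that their curvature functions are periodic solutions of the modified KdV equation (\ref{secondorder}) with vanishing area multiplier $\lambda = 0$, and then to exclude non-constant asymptotic solutions using the Proposition just established.

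First I would record that $\lambda = 0$ holds for both families. For Willmore Hopf tori this is exactly the normalization noted above, namely $G > 0$, $\lambda = 0$ and $(\mu + G) = \tfrac12 G$. For constrained Willmore tori of revolution the conformal type is governed only by the length of the profile curve in $H^2$, so conformal variations preserve only the length; the associated Euler--Lagrange equation therefore carries a length multiplier $\mu$ but no enclosed-area multiplier, i.e. $\lambda = 0$. In both cases the profile curve closes up, so its geodesic curvature $\kappa$ is a periodic solution of (\ref{secondorder}), equivalently of (\ref{P4}), with $\lambda = 0$.

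Next I would run the dichotomy on the roots of $P_4$. If $P_4$ has only simple roots we are in the second alternative and there is nothing to prove, so assume $P_4$ has a multiple root. Since $\kappa$ is periodic it attains an extremum, so I may normalize $\kappa'(0) = 0$ with $\kappa(0) = \kappa_0$ a real root of $P_4$, exactly as in the discussion of the multiple-root case. If $\kappa_0$ is itself a multiple root of $P_4$, then $\kappa_0$ also annihilates $\del P_4/\del \kappa$, which is the right-hand side of (\ref{secondorder}); by the Picard--Lindel\"of theorem the unique solution of this initial value problem is the constant $\kappa \equiv \kappa_0$, i.e. the torus is homogeneous. If instead $\kappa_0$ is a simple root of $P_4$ while $P_4$ still has a multiple root elsewhere, then $\kappa$ is by definition an asymptotic solution; but asymptotic solutions with $\lambda = 0$ are never periodic by the preceding Proposition, contradicting the closedness of the profile curve. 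Hence a multiple root of $P_4$ forces the homogeneous case, which is the claim.

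The step I expect to require the most care is the first one, establishing $\lambda = 0$ for both families uniformly. For Hopf tori it is already isolated, but for tori of revolution I would want to argue cleanly that no enclosed-area constraint enters --- that the one-parameter family of rectangular conformal types is cut out by the length alone --- so that the Euler--Lagrange equation really is (\ref{secondorder}) with $\lambda = 0$ rather than the two-parameter constrained-elastic equation. Once $\lambda = 0$ is secured for both, the remainder is the routine Picard--Lindel\"of and periodicity dichotomy driven entirely by the Proposition on asymptotic solutions.
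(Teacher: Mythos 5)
Your proof is correct and follows exactly the route the paper intends: the corollary is stated without its own proof precisely because it assembles the immediately preceding ingredients --- the Picard--Lindel\"of argument for a multiple-root initial value and the Proposition that asymptotic solutions with $\lambda = 0$ are never periodic --- together with the fact that $\lambda = 0$ for both families. The step you flag as delicate is already settled in the paper's Section 2: by \cite{LS1} the conformal type of a torus of revolution is determined by the length of the profile curve alone, and the principle of symmetric criticality then says the profile curve is elastic (length constraint only, no area multiplier) in $H^2$, while for Willmore Hopf tori the normalization $\lambda = 0$, $(\mu + G) = \tfrac{1}{2}G$ is recorded in the proof of the corollary on orbitlike curves.
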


\begin{Rem}
Closed asymptotic solutions corresponding to constrained Willmore tori do exist for curves in $S^2$. These are obtained by a simple factor dressing of a multi-covered circle. 
In fact all asymptotic solutions on $S^2$ can be obtained this way. 
\end{Rem}

\subsection{Closing Conditions}\label{CC}
To obtain closing conditions for the curves $\gamma_E$ defined in Theorem \ref{curves} we compute their monodromy. The curve $\gamma_E$ closes if and only if the monodromy is a rotation by a rational angle.
We fix a lattice $\Gamma$ in $\C$ with real lattice invariants $g_2$ and $g_3$ and get  a $\wp-$function with respect to this lattice. We denote by $\omega_i,$ $i= 1,2,3,$ the  half periods of $\Gamma$ and fix $\omega_1$ to be the half period lying on the real axis. For real $g_2$ and $g_3$  we always obtain a half lattice point on the imaginary axis, which we denote by $\omega_3.$ In the case of $D < 0$ we have $\omega_1 = \omega_3$ mod $\Gamma.$

\begin{Pro}
With the notations above the curve $\gamma_E$ closes after $n$ periods of the Weierstrass $\wp-$function if and only if there exist a $m \in \N$ with $gcd(m,n) = 1$ such that 
$$ 2 \eta_1 \rho - 2\zeta(\rho) \omega_1  = \frac{m}{ n}\pi i.$$ 
Here $\zeta$ is the Weierstrass $\zeta-$function, $\eta_1 := \zeta(\omega_1)$ and $E  = \wp(\rho).$ 
\end{Pro}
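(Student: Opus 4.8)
The plan is to analyze the monodromy of the curve $\gamma_E$ as $x$ increases by one period of $\wp$, and then to convert the statement "the curve closes after $n$ periods" into an arithmetic condition on the monodromy eigenvalues. Since $\gamma_E$ lives in $\C P^1$, closing up is a projective condition: the curve closes precisely when going around $n$ periods returns $\hat\gamma_E$ to a complex scalar multiple of itself, i.e. when the monodromy acts on $[\hat\gamma^1_E, \hat\gamma^2_E]$ as the identity in $\P^1$. So the first step is to compute how $\hat\gamma^1_E$ and $\hat\gamma^2_E$ transform under $x \mapsto x + 2\omega_1$, where $2\omega_1$ is the real period along which the curve is traversed.

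First I would use the quasi-periodicity of the Weierstrass $\sigma$ and $\zeta$ functions. Recall the standard relations $\zeta(z + 2\omega_1) = \zeta(z) + 2\eta_1$ and $\sigma(z + 2\omega_1) = -\sigma(z)\,e^{2\eta_1(z+\omega_1)}$, with $\eta_1 = \zeta(\omega_1)$. Applying these to
\begin{equation*}
\hat\gamma^1_E = \frac{\sigma(x+x_0-\rho)}{\sigma(x+x_0)}\,e^{\zeta(\rho)(x+x_0)},
\end{equation*}
the two sign factors $(-1)$ from numerator and denominator cancel, and the exponential factors collect into a single constant multiplier. A direct computation should yield $\hat\gamma^1_E(x+2\omega_1) = e^{2\eta_1\rho - 2\zeta(\rho)\omega_1}\,\hat\gamma^1_E(x)$, and symmetrically $\hat\gamma^2_E(x+2\omega_1) = e^{-(2\eta_1\rho - 2\zeta(\rho)\omega_1)}\,\hat\gamma^2_E(x)$, since $\hat\gamma^2_E$ is obtained from $\hat\gamma^1_E$ by $\rho \mapsto -\rho$. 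Thus the monodromy over one $\wp$-period is diagonal, $\operatorname{diag}(e^{\Lambda}, e^{-\Lambda})$ with $\Lambda := 2\eta_1\rho - 2\zeta(\rho)\omega_1$, acting on $\C^2$, and hence acts on $\C P^1$ as multiplication of the affine coordinate $\hat\gamma^1_E/\hat\gamma^2_E$ by $e^{2\Lambda}$.

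The next step is to impose closure after $n$ periods. The projective monodromy over $n$ periods is multiplication by $e^{2n\Lambda}$, so $\gamma_E$ closes after exactly $n$ periods iff $e^{2n\Lambda} = 1$ while $e^{2k\Lambda} \neq 1$ for $0 < k < n$; equivalently $2n\Lambda \in 2\pi i \Z$ with the least such $n$. Writing $2\Lambda \cdot n = 2\pi i\, m$ for some integer $m$, and reducing to lowest terms so that $\gcd(m,n)=1$ (this enforces "after $n$ periods" and not a proper divisor), gives $\Lambda = \tfrac{m}{n}\pi i$, which is exactly the claimed condition $2\eta_1\rho - 2\zeta(\rho)\omega_1 = \tfrac{m}{n}\pi i$. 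I would also note here that for the curve to close the monodromy must be a genuine rotation, i.e. $\Lambda$ purely imaginary, which is consistent with the right-hand side; one should check that the geometric hypotheses on $x_0$ and the reality of $g_2, g_3$ from Lemma \ref{compa} indeed force $\Lambda \in i\R$.

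The main obstacle I anticipate is the bookkeeping in the monodromy computation: one must keep careful track of the $-1$ signs from $\sigma$ and the linear-in-$z$ exponent shifts from both $\sigma$ and $\zeta$, and verify that all the $z$-dependent pieces cancel so that the multiplier is a genuine constant (independent of $x$). A secondary subtlety is the distinction between the additive monodromy on the lift $\hat\gamma_E \subset \C^2$ and the projective monodromy on $\gamma_E \subset \C P^1$: the factor of $2$ between $\Lambda$ and the affine-coordinate multiplier $e^{2\Lambda}$ must be reconciled with the final normalization, and the minimality clause $\gcd(m,n)=1$ must be justified as encoding that $n$ is the exact (not merely a) number of periods after which the curve closes.
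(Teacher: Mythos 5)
Your proposal follows essentially the same route as the paper's proof: compute the multipliers of $\hat\gamma^1_E$ and $\hat\gamma^2_E$ under $x \mapsto x + 2\omega_1$ from the quasi-periodicity of the Weierstrass $\sigma$-function, observe that the projective monodromy of $\gamma_E$ is the quotient of the two multipliers, and encode exact closure after $n$ periods by the coprimality condition $\gcd(m,n)=1$. The only discrepancy is a harmless sign in your predicted multipliers --- the computation actually gives $\hat\gamma^1_E(x+2\omega_1) = e^{-2\eta_1\rho + 2\zeta(\rho)\omega_1}\,\hat\gamma^1_E(x)$ and the reciprocal factor for $\hat\gamma^2_E$, exactly as in the paper --- which merely flips the sign of $m$ and does not affect the stated condition.
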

\begin{Rem}
Geometrically speaking, the number $m$ is the winding number of the curve and the number $n$ the lobe number.
\end{Rem}
\begin{proof}
Provided that $E$ is not a branch point of the $\wp$-function the curve $\gamma_E = [\hat\gamma_E^1, \hat\gamma_E^2]$ is given by two complex valued functions 
\begin{equation*}
\begin{split}
\hat\gamma^1_E &=  \frac{\sigma(x+x_0 - \rho)}{\sigma(x+x_0)}e^{\zeta(\rho)(x+x_0)}\\
\hat\gamma^2_E &= \frac{\sigma(x+x_0 + \rho)}{\sigma(x+x_0)}e^{\zeta(-\rho)(x+x_0)}, \quad \text{with } \wp(\rho) = E.
\end{split}
\end{equation*}
Further, let $\zeta$ be the Weierstrass $\zeta$-function and  define $\eta_1 := \zeta(\omega_1)$, which is a real number because the lattice invariants $g_2$ and $g_3$ are real.
With the formulas for the monodromy of the Weierstrass $\sigma$ function we obtain:
\begin{equation*}
\begin{split}
\hat \gamma_E^1(x+ 2 \omega_1) &= e^{-2 \eta_1 \rho+ 2\zeta(\rho)\omega_1}  \hat \gamma_E^1(x)\\
\hat \gamma_E^2(x+ 2 \omega_1) &=e^{2 \eta_1 \rho- 2\zeta(\rho)\omega_1}  \hat \gamma_E^2(x).
\end{split}
\end{equation*}
The monodromy of the $\gamma_E$ is the quotient of the both monodromies computed here. 
Therefore we get that the curve closes after $n$ periods if and only if there exist a $m\in \Z$ with $(m,n)$ coprime  such that 
\begin{equation*}
 e^{ 4\eta_1 \rho- 4\zeta(\rho)\omega_1} = e^{\frac{m}{n} 2\pi i},
 \end{equation*}
 which proves the statement.
 \end{proof}
\begin{Cor}
Varying $x_0$ yields {\em isospectral} deformations of constrained elastic curves, i.e., deformations preserving the monodromy and the parameters $g_2$, $g_3$ and $E$.  In particular, every constrained elastic curve is isospectral to an elastic curve,  i.e.,  a solution of equation (\ref{secondorder}) with $\lambda = 0,$ unique up to reparametrization.
\end{Cor}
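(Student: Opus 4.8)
The statement splits into two claims: that varying $x_0$ is an isospectral deformation, and that the isospectral class of any constrained elastic curve contains an elastic ($\lambda=0$) representative, unique up to reparametrization. The plan for the first claim is to read off $x_0$-independence directly from the monodromy computed just above. There the quasi-periods of $\hat\gamma^1_E,\hat\gamma^2_E$ under $x\mapsto x+2\omega_1$ were found to be $e^{\mp(2\eta_1\rho-2\zeta(\rho)\omega_1)}$, and every ingredient of this factor---$\omega_1$, $\eta_1=\zeta(\omega_1)$, the function $\zeta$, and $\rho$ with $\wp(\rho)=E$---depends only on the lattice invariants $g_2,g_3$ and on $E$. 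The phase $x_0$ enters $\hat\gamma^i_E$ solely through the arguments of $\sigma$ and $\zeta$ and cancels in the ratio defining the monodromy, exactly as in that proof. Hence varying $x_0$ preserves the monodromy together with $g_2,g_3,E$, so it is isospectral; moreover a real shift $x_0\mapsto x_0+t$ translates the associated curvature, $\kappa(x)\mapsto\kappa(x+t)$, i.e.\ acts by reparametrization, so only the imaginary direction of $x_0$ is essential.

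For the second claim I would first fix $(g_2,g_3,E)$ and record how the constraint data move with $x_0$. Matching the Schwarzian $-2\wp(\cdot+x_0)-\tfrac16 c$ of a constrained elastic curve with the $\gamma_E$ of Theorem \ref{curves} forces $c=\mu-\tfrac{G}{2}=6E$, so $\mu-\tfrac{G}{2}$ is fixed along the family. By Lemma \ref{compa} each admissible $x_0$ yields a real curvature with $\mu+G=12b$, and a short computation using the definitions of $g_2,g_3$ (solving for $\nu$ and substituting) gives $\lambda^2=-16\,P_3(2b)$. Thus along the deformation only $b$ varies, and $\lambda=0$ precisely when $2b$ is a root of $P_3$. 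Corollary \ref{g3} then says realness of $\kappa$ forces $2b$ to lie at or below the smallest real root $e$ of $P_3$, with equality exactly at $\lambda=0$; so within the family the elastic locus is the single value $2b=e$.

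To exhibit the elastic representative I would argue existence on the ODE side rather than by inverting $x_0\mapsto b$. Setting $2b=e$ determines $\mu,G,\nu$ and $\lambda=0$ uniquely, and because $D\neq0$ we are in the non-homogeneous regime where $P_4$ has only simple roots; hence by Corollary \ref{g3} the stationary mKdV equation has a genuine non-constant periodic real solution $\kappa_{el}$. Its Schwarzian solves the same equation (\ref{KdV}) with the same $c,d,e$, hence equals $-2\wp(x+x_0^{el})-E$ for some $x_0^{el}$; since $\kappa_{el}$ is non-constant the Schwarzian is non-real, so $x_0^{el}$ avoids the excluded real locus and is admissible. By the first claim this curve carries the same monodromy as the original one, so the two are isospectral, and it is the desired elastic curve. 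Uniqueness up to reparametrization follows because $2b=e$ pins all parameters, leaving only the translation freedom in $x_0^{el}$ and the orientation-reversing symmetry $\kappa_{el}\mapsto-\kappa_{el}$ (equivalently $x_0^{el}\mapsto-x_0^{el}$).

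The step I expect to be the real work is this last bookkeeping: verifying that the limiting value $2b=e$ corresponds to a non-degenerate, admissible $x_0^{el}$---that the elastic member is an honest curve and not the homogeneous collapse one gets when $q$ becomes real on the excluded lines---and that no root of $P_3$ other than the smallest yields a real $\kappa$, so the elastic representative is genuinely unique. Granting this, it is the monodromy invariance of the first claim that upgrades the coincidence of spectral data $(g_2,g_3,E)$ into an actual isospectral relation.
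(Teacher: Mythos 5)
Your proof is correct, and its skeleton is the paper's: the isospectral claim is read off from the $x_0$-independence of the monodromy factor $e^{\mp(2\eta_1\rho-2\zeta(\rho)\omega_1)}$ (the paper disposes of this in one sentence, ``varying $x_0$ does not effect the closing condition''; your cancellation computation is the substance behind it), and the elastic representative is obtained in both cases by pinning $\tfrac16(\mu+G)=2b$ at the smallest real root $\wp(\omega_3)$ of $P_3$, which forces $\lambda=0$ and, via Lemma \ref{positive}/Corollary \ref{g3}, guarantees real roots of $P_4$. Where you genuinely diverge is the finishing step. The paper is explicit: it writes down the four roots $\kappa_0^i$ of $P_4$, computes the two resulting values of $\wp(x_0)$, identifies them with $\pm x_0\in i\R$ and $\pm x_0\in i\R+\omega_1$, and locates a unique $x_0\in i(0,-i\omega_3)$, with $x_0\mapsto -x_0$ accounting for orientation reversal. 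You instead argue abstractly on the ODE side: $D\neq 0$ gives $P_4$ only simple roots (correct -- the quartic and its cubic resolvent have proportional discriminants -- and this also rules out constant solutions at the chosen parameters, which answers the ``homogeneous collapse'' worry you flag at the end), Corollary \ref{g3} then yields a nonconstant periodic real $\kappa_{el}$, and since its Schwarzian solves (\ref{KdV}) with the same $c=6E$, $d$, $e$, it must equal $-2\wp(x+x_0^{el})-E$ with $x_0^{el}$ admissible because $q$ is non-real. This trades the paper's explicit location of $x_0^{el}$ (which the paper reuses later, e.g.\ in the footnote after Theorem \ref{Hopf1}) for a shorter, computation-free existence argument. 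Your identity $\lambda^2=-16\,P_3(2b)$, which checks out against the formulas for $g_2,g_3$ in Lemma \ref{Lemma1} (note the minor notational clash: your $e$ is both a KdV coefficient and the smallest root), is a clean addition not stated in the paper and makes the characterization of the elastic locus transparent; likewise, the residual uniqueness question you defer -- that no root of $P_3$ other than the smallest admits a real $\kappa$ -- is not extra work but is exactly the inequality of Corollary \ref{g3}, so your argument is in fact complete as written.
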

\begin{proof}
Varying $x_0$ does not effect the closing condition, thus we obtain a $1-$parameter family of closed constrained elastic curves. 
For the second statement we define  $\tfrac{1}{6}(\mu + G):= \wp(\omega_3),$ which is by definition the smallest real root of $P_3.$ Thus we have $\lambda = 0$ and $\nu = 4g_2 - 12 \wp(\omega_3)^2.$ This choice of parameters leads to an elastic curve since the so defined $P_4$ has real roots by lemma \ref{positive}. 
The corresponding $x_0$ can be determined as follows: 
The roots of $P_4$ are given by 
\begin{equation}
\begin{split}
\kappa_0^1 &= \sqrt{-24 \wp(\omega_3) + \sqrt{624 \wp^2(\omega_3) - 16 g_2}}\\
  \kappa_0^2 &= - \sqrt{-24 \wp(\omega_3) + \sqrt{624 \wp^2(\omega_3) - 16 g_2}}\\
    \kappa_0^3 &= \sqrt{-24 \wp(\omega_3) - \sqrt{624 \wp^2(\omega_3) - 16 g_2}}\\
      \kappa_0^4 &= -\sqrt{-24 \wp(\omega_3) - \sqrt{624 \wp^2(\omega_3) - 16 g_2}},
      \end{split}
      \end{equation}
if the solution is orbitlike. For wavelike solutions there are only $2$ real roots which are given by $\kappa_0^1$ and $\kappa_0^2.$\\

\noindent
 Thus the possible values of $\wp(x_0)$ are
$$\wp(x_0) = - \tfrac{7}{2}\wp(\omega_3) - \tfrac{1}{8}\sqrt{624 \wp^2(\omega_3) - 16 g_2},$$$$ Ê\wp(x_0) = - \tfrac{7}{2}\wp(\omega_3) + \tfrac{1}{8}\sqrt{624 \wp^2(\omega_3) + 16 g_2}.$$
The first choice corresponds to $\pm x_0 \in i\R$ and the second to $\pm x_0 i\R + \omega_1$. Both choices yield the same curve up to reparametrization and there exist a unique $x_0 \in  i (0,-i \omega_3)$  such that  $\kappa_0$ is a root of $P_4$. The choice of $x_0 \in (i \omega_3, 0))$ leads to the same curves with different orientation, since the map $x_0 \mapsto -x_0$ corresponds to $\kappa \mapsto - \kappa$. 
\end{proof}
\noindent
Because of the above corollary, we restrict ourselves in the following to the case with $\lambda = 0.$

\begin{The}\label{Hopf1}
Let $g_2$ and $g_3$ be real constants with $g_2^3 - 27g_3^2  \neq 0.$ Then every rational point of the function
$$g : i\R \setminus \{\omega_3 \Z\} \rightarrow i \R, \rho \mapsto g(\rho) =  \eta_1 \rho - \zeta(\rho) \omega_1$$
gives rise to a closed elastic curve $\gamma_E,$ $E = \wp(\rho),$ as defined in Theorem  \ref{curves}, on a round $S^2$ with curvature $G = 4 (\wp(\omega_3) - E)  $. In particular, for fixed $g_2$ and $g_3$ there exist to every integer $n$  a  simply closed  elastic curve with $n$ lobes.
\end{The}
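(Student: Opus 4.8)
\section*{Proof proposal}

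The plan is to read off both assertions from the closing condition established in the preceding Proposition together with the isospectral normalisation $\tfrac16(\mu+G)=\wp(\omega_3)$. That condition states that $\gamma_E$ closes up after $n$ periods of $\wp$ precisely when $2g(\rho)=\tfrac{m}{n}\pi i$ for some $m$ coprime to $n$, i.e.\ when $g(\rho)=\tfrac{m}{2n}\pi i$. Hence every $\rho$ with $g(\rho)\in\pi i\,\mathbb{Q}$ yields a closed curve. To see that it is \emph{elastic} on a round sphere I would invoke the isospectral Corollary: taking the smallest root $\wp(\omega_3)$ as $\tfrac16(\mu+G)$ forces $\lambda=0$, and matching the Schwarzian derivative $q_E=-2\wp(\,\cdot\,+x_0)-E$ of Theorem \ref{curves} against the space-form formula of Lemma \ref{Lemma1} through Lemma \ref{compa} pins the ambient curvature to $G=4(\wp(\omega_3)-E)$. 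The first sentence is then immediate, so the genuine content is the existence of an $n$-lobed curve for every $n$, which amounts to a range analysis of $g$ restricted to $i\R$.

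First I would check that $g$ really maps $i\R\setminus\{\omega_3\Z\}$ into $i\R$. Since $g_2,g_3$ are real, $\omega_1$ and $\eta_1=\zeta(\omega_1)$ are real and $\zeta$ satisfies $\overline{\zeta(z)}=\zeta(\bar z)$; for $\rho=is$ this gives $\overline{\zeta(\rho)}=\zeta(-\rho)=-\zeta(\rho)$, so $\zeta(\rho)\in i\R$ and both $\eta_1\rho$ and $\zeta(\rho)\omega_1$ are purely imaginary. Writing $\rho=is$ and $\phi(s):=g(is)/i$, I would then record the three boundary data that drive the argument: from the pole $\zeta(\rho)=\rho^{-1}+O(\rho^3)$ one gets $\phi(s)=\eta_1 s+\omega_1/s+O(s^3)\to+\infty$ as $s\to0^+$ (using $\omega_1>0$); the Legendre relation $\eta_1\omega_3-\eta_3\omega_1=\tfrac{\pi i}{2}$ gives the interior value $g(\omega_3)=\eta_1\omega_3-\zeta(\omega_3)\omega_1=\tfrac{\pi i}{2}$, i.e.\ $\phi(w_3)=\tfrac{\pi}{2}$ with $w_3:=\omega_3/i$; and the quasi-periodicity $\zeta(\rho+2\omega_3)=\zeta(\rho)+2\eta_3$ combined with the same Legendre identity yields $g(\rho+2\omega_3)=g(\rho)+\pi i$, so that $\phi(2w_3^-)=-\infty$.

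With these in hand the existence statement follows from the intermediate value theorem. On $s\in(w_3,2w_3)$ the function $\phi$ is continuous with $\phi(w_3^+)=\tfrac{\pi}{2}$ and $\phi(2w_3^-)=-\infty$, hence attains every value of $(-\infty,\tfrac{\pi}{2})$; in particular it attains $\tfrac{\pi}{2n}$ for every integer $n\ge 2$. The corresponding $\rho$ satisfies $g(\rho)=\tfrac{1}{2n}\pi i$, which is the closing condition with winding number $m=1$ and lobe number $n$ (automatically coprime), so the resulting $\gamma_E$ is a simply closed elastic curve with $n$ lobes. Since $\tfrac{\pi}{2n}\neq\tfrac{\pi}{2}$ the point $\rho$ is not the half period $\omega_3$, so $E=\wp(\rho)$ is not a branch point and Theorem \ref{curves} applies; moreover $\wp(is)<\wp(\omega_3)$ off the half periods forces $G=4(\wp(\omega_3)-E)>0$, confirming that the curve lives on a genuinely round $S^2$. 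The excluded value $\phi=\tfrac{\pi}{2}$, i.e.\ $\rho=\omega_3$ and $n=1$, is exactly the flat degeneration $G=0$, a circle.

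I expect the main obstacle to be the precise global control of $g$ along $i\R$: establishing the three boundary values cleanly and, above all, getting the sign and normalisation right in the Legendre computation so that $g(\omega_3)=\tfrac{\pi i}{2}$ rather than its negative, since the whole range argument hinges on the interior value $\phi(w_3)=\tfrac{\pi}{2}$. If one also wants uniqueness of the $n$-lobed curve and a monotone labelling by $n$ (rather than mere existence), one must analyse $g'(\rho)=\eta_1+\omega_1\wp(\rho)$ and show it has constant sign on the relevant subinterval, which reduces to the sign of $\eta_1+\omega_1\wp(\omega_3)$; for the existence claim as stated, however, continuity together with the intermediate value theorem suffices and this extra sign analysis can be bypassed.
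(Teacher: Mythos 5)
Your route is essentially the paper's: reduce to the monodromy closing condition $g(\rho)=\tfrac{m}{2n}\pi i$ from the preceding Proposition, verify that the reality of $g_2,g_3$ makes $g$ map $i\R\setminus\{\omega_3\Z\}$ into $i\R$, and then run an intermediate value argument on boundary data of $g$ along the imaginary axis. But there is one genuine gap: you assert $g(\omega_3)=\tfrac{\pi i}{2}$ and the shift $g(\rho+2\omega_3)=g(\rho)+\pi i$ unconditionally via the Legendre relation. Both hold only in the orbitlike case $D=g_2^3-27g_3^2>0$, where the lattice is rectangular and $(\omega_1,\omega_3)$ is a fundamental pair of half-periods. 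In the wavelike case $D<0$, which is allowed by the hypothesis $D\neq0$ and genuinely occurs on $S^2$ (the paper even shows a wavelike spherical example, and a Corollary rules \emph{orbitlike} curves out for Willmore Hopf tori), the paper's conventions give $\omega_1\equiv\omega_3 \bmod \Gamma$; then $(\omega_1,\omega_3)$ is not a fundamental pair, the identity $\eta_1\omega_3-\zeta(\omega_3)\omega_1=\tfrac{\pi i}{2}$ fails, and the paper's proof explicitly records the dichotomy $g(\omega_3)=0$ or $\tfrac{1}{2}\pi i$ \emph{depending on the sign of $D$}. Your shift identity likewise degenerates to $g(\rho+2\omega_3)=g(\rho)+2g(\omega_3)$, which is no longer a translation by $\pi i$. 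So as written, your range analysis on $(\omega_3,2\omega_3)$, hinged on the interior value $\phi(w_3)=\tfrac{\pi}{2}$, covers only half of the theorem; you flagged the Legendre step as a sign risk, but the real issue is that it is case-dependent, not a sign.

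The repair is one line: for $D<0$ the interval $(0,\omega_3)$ already suffices, since $\phi(s)\to+\infty$ as $s\to0^+$ while $g(\omega_3)$ is an integer multiple of $\pi i$ (trivial for the closing condition), so the range contains a half-line $(0,\infty)$ and hence $\tfrac{\pi}{2n}$ for every $n\geq1$; your two-interval argument is then needed only for $D>0$. On the positive side, in the orbitlike case your explicit continuation past $\omega_3$ via the quasi-periodicity of $\zeta$, producing all values in $(-\infty,\tfrac{\pi}{2})$ and hence simply closed ($m=1$) curves with $n\geq2$ lobes, actually supplies a detail the paper compresses into the bare assertion that $g(i\R)$ is a nontrivial interval. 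Your remaining checks agree with the paper: reality of $\zeta(\rho)$ for $\rho\in i\R$, the bound $E=\wp(\rho)<\wp(\omega_3)$ along the imaginary axis forcing $G=4(\wp(\omega_3)-E)>0$, and the identification of $\rho=\omega_3$ with the flat degeneration.
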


\begin{proof}
The polynomial $P_3$ defining the Weierstrass $\wp-$function has either $1$ or $3$ real roots. By assumption $\wp(\omega_3) = \tfrac{1}{6}(\mu + G),$ where $\omega_3 \in i\R$ is a half lattice point of $\Gamma.$  We vary $\rho,$ with $\wp(\rho) = E,$ to close the curves. Since $E = \tfrac{1}{6}(\mu - \tfrac{1}{2}G) < \wp(\omega_3),$
we obtain $\rho \in i\R\setminus \{\omega_3 \Z\}$, see \cite{AS}.
For fixed real invariants $g_2$ and $g_3$ we get that $\eta_1$ and $\omega_1$ are also real. Further, for $\rho \in i \R$ the constant  $\zeta(\rho) \in i\R,$ too.
Thus the map
 $$g : i\R \rightarrow i \R, g(\rho) =  \eta_1 \rho - \zeta(\rho) \omega_1,$$
is well defined and  $g(i \R)$ is a nontrivial interval since
 $$\lim_{\rho \rightarrow \pm 0} g(\rho )  = \pm \infty \text{ and } g(\omega_3) = 0 \text { or } g(\omega_3) = \tfrac{1}{2} \pi i,$$
 depending on whether the solution is orbitlike or wavelike. 
\end{proof}
\begin{Rem}
For constrained elastic curves in $S^2$ it is necessary to choose $\rho \in i \R$ mod $\Gamma.$ Thus it is isospectral to an elastic curve in  a space form of positive curvature and $\tfrac{1}{6}(\mu + G) > E = \tfrac{1}{6}(\mu - \tfrac{1}{2}G).$ Nevertheless, by decreasing $\tfrac{1}{6}(\mu + G)$ for fixed $g_2$, $g_3$ and $E$\footnote{By choosing $\tfrac{1}{6}(\mu + G)$ according to Lemma \ref{g3} the parameter $\lambda$ is determined up to sign and $\nu$ is fixed and there is a  $x_0 \in i (0, -i \omega_3)$ with $\wp(x_0)= - \tfrac{ \kappa_0^2}{8} - \tfrac{1}{12}(\mu + G)$. Therefore varying $\tfrac{1}{6}(\mu + G)$ is equivalent to the isospectral deformations given by varying $x_0.$},   the resulting curves first become a constrained elastic curve in $\R^2$ for $\tfrac{1}{6}(\mu + G) = E$ and then turns into a constrained elastic (but not elastic) curve in $H^2.$
\end{Rem}
\begin{figure}[htbp]
  \centering
  \begin{minipage}[b]{5.5 cm}
    {\vbox{\hspace{-0.4cm}
\vbox{\vspace{0.5cm}
\includegraphics[scale=0.125]{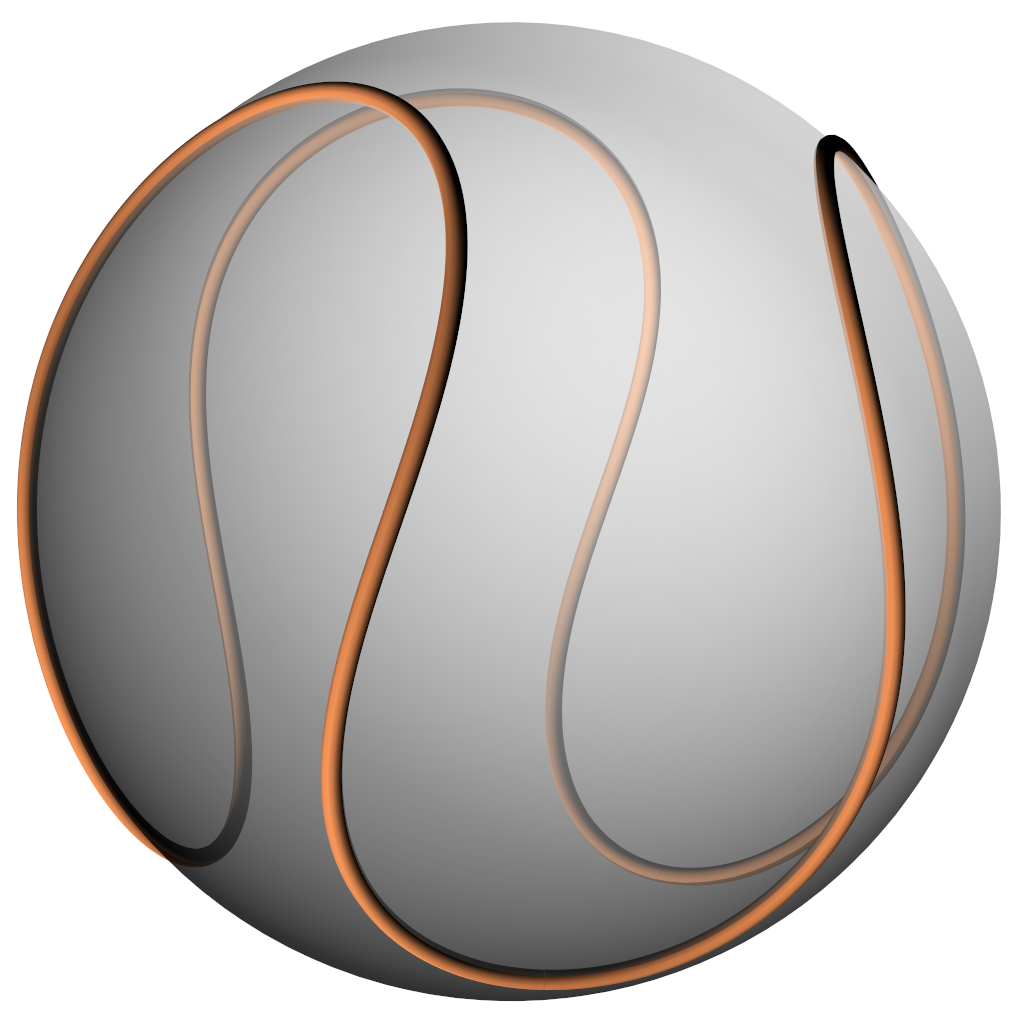}
}
}}  
  \end{minipage}
  \begin{minipage}[b]{5.5 cm}
   {\vbox{\hspace{0.5cm}
\vbox{\vspace{0.5cm}
\includegraphics[scale=0.13]{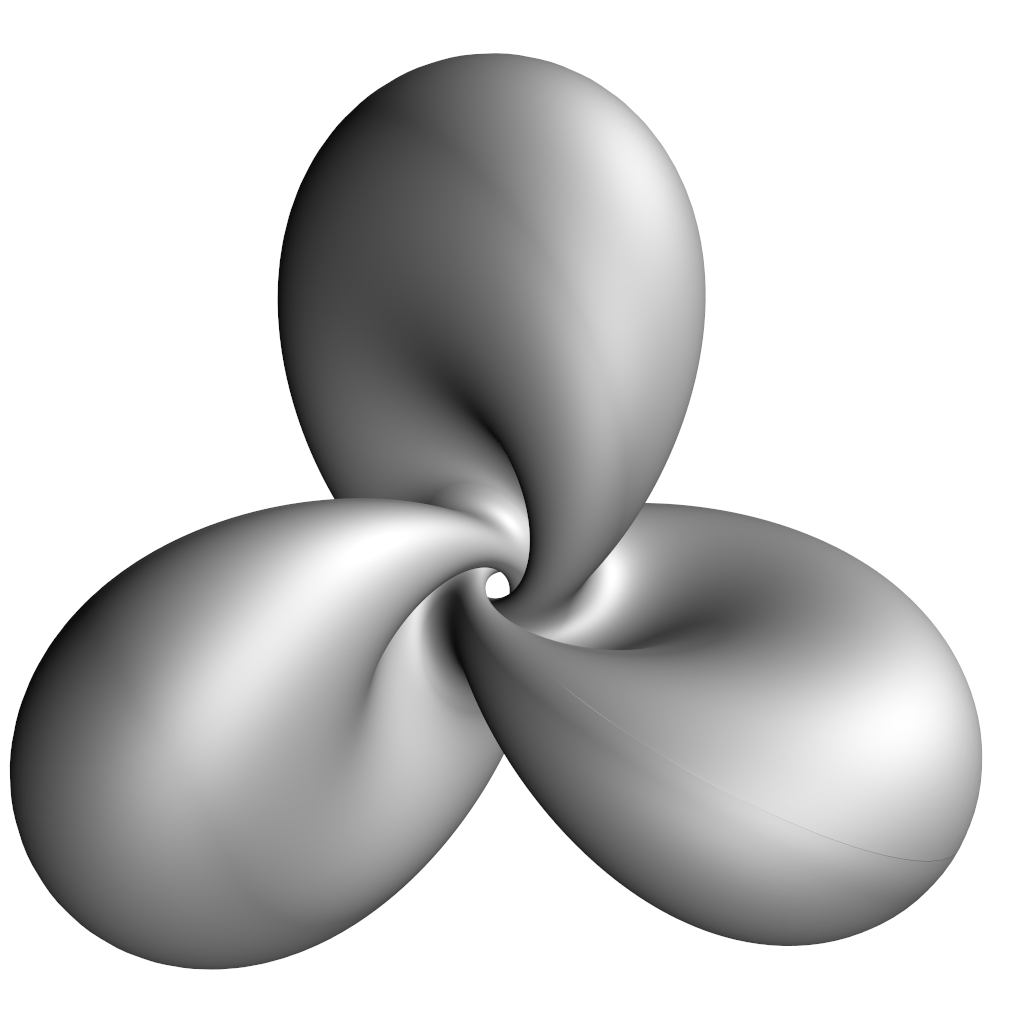}
}
}}
  \end{minipage}
  \caption{Wavelike elastic curve in $S^2$ to parameters $\mu = -\tfrac{1}{2}$ and $\lambda = 0$ in $S^2$ and corresponding Willmore Hopf torus.(by Nick Schmitt)}
  \label{Labelname}
\end{figure}

\begin{Pro}
Let $g_2$ and $g_3$ be real constants with $g_2^3 - 27g_3^2 < 0$ and $\gamma_E$ be the family of curves defined in Theorem \ref{curves}. Then there exists at most one closed elastic curve in a space form of constant curvature $G < 0 $ in that family.  \\
\end{Pro}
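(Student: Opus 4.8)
The plan is to reduce the closing problem to counting the zeros of a single real-valued function on an interval, exploiting the fact that — in contrast to the spherical case of Theorem \ref{Hopf1} — the relevant spectral parameter is now forced to be \emph{real}, so that the monodromy exponent becomes real rather than purely imaginary.

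First I would locate the admissible values of $\rho$. Since we work with $\lambda = 0$, the same computation as in Theorem \ref{Hopf1} expresses the curvature of the ambient space form as $G = 4(\wp(\omega_3) - E)$, where $\wp(\omega_3)$ is the \emph{unique} real root of $P_3$ (recall $D<0$, and $\omega_1 \equiv \omega_3 \bmod \Gamma$, so $\wp(\omega_1)=\wp(\omega_3)$). Hence $G<0$ is equivalent to $E = \wp(\rho) > \wp(\omega_3)$. In the wavelike case $\wp$ takes real values exceeding $\wp(\omega_3)$ precisely along the real axis between two consecutive poles, so $\rho$ must lie on the real segment $(0,\omega_1)$, with $E=\wp(\rho)$ sweeping out $(\wp(\omega_3),\infty)$ as $\rho$ runs from $\omega_1$ down to $0$; see \cite{AS}.

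The key step is then the dichotomy with the spherical case. For $\rho \in (0,\omega_1)$ the quantities $\eta_1=\zeta(\omega_1)$, $\omega_1$, $\rho$ and $\zeta(\rho)$ are all real, hence the monodromy exponent
$$g(\rho) := \eta_1\rho - \zeta(\rho)\,\omega_1$$
is real. By the closing condition of Section \ref{CC}, the curve $\gamma_E$ closes if and only if $2g(\rho)$ is a purely imaginary rational multiple of $\pi i$; for real $g(\rho)$ this forces $g(\rho)=0$, i.e. $m=0$ and $n=1$, trivial monodromy and closure after a single period. Geometrically, a nonzero real $g(\rho)$ yields hyperbolic monodromy, an infinite-order isometry of $H^2$, under which the profile curve can never close. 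Thus, while on $S^2$ the imaginary, unbounded function $g$ produced closed curves of every lobe number, in negative curvature closure is possible only at a zero of $g$.

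It remains to bound the zeros of $g$ on $(0,\omega_1)$. One has $g(\omega_1)=\eta_1\omega_1-\zeta(\omega_1)\omega_1=0$ and $\lim_{\rho\to 0^+} g(\rho) = -\infty$, while
$$g'(\rho) = \eta_1 - \zeta'(\rho)\,\omega_1 = \eta_1 + \wp(\rho)\,\omega_1 .$$
Since $\wp$ is strictly monotone on $(0,\omega_1)$, the derivative $g'$ is strictly monotone and so changes sign at most once; hence $g$ is either monotone or has a single interior maximum. Combined with the boundary values $g(0^+)=-\infty$ and $g(\omega_1)=0$, this shows $g$ attains the value $0$ at most once in the open interval $(0,\omega_1)$, giving at most one closed elastic curve in a space form of curvature $G<0$ within the family. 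The main obstacle is the first step: correctly recognizing that negative ambient curvature forces $\rho$ onto the real axis, so that $g$ becomes real; once this is established, the realness of $g$ collapses the closing condition to $g(\rho)=0$ and the count is an elementary monotonicity argument.
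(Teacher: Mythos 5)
Your proposal is correct and follows essentially the paper's own route: you recognize that $G<0$ forces $E=\wp(\rho)>\wp(\omega_3)$ and hence $\rho$ real (with the reduction to $(0,\omega_1)$ justified by the M\"obius equivalence of $\pm\rho$), so that the closing condition collapses to $g(\rho)=\eta_1\rho-\zeta(\rho)\,\omega_1=0$. Your zero count via the strict concavity of $g$ (from $g'(\rho)=\eta_1+\wp(\rho)\,\omega_1$ with $\wp$ strictly decreasing) together with $g(0^+)=-\infty$, $g(\omega_1)=0$ is exactly the paper's analysis of the intersections of the line $\rho\mapsto\rho\,\eta_1/\omega_1$ with the convex/concave graph of $\zeta|_{\R}$, reproducing even the same dichotomy $\eta_1+\wp(\omega_1)\,\omega_1<0$ (one closed curve) versus $\geq 0$ (none).
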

\begin{proof}
In this case $e = \tfrac{1}{6}(\mu + G)$ is the only real root of $P_3.$ Further $\rho$ with $\wp (\rho) = E > e$ does not lie on the imaginary axis.
Since $E$ must be real, we get $\rho \in \R$ and thus $\zeta(\rho) \in \R$.
Therefore the only chance to get a closed solution is that
$$\rho \eta_1 - \zeta(\rho) \omega_1 = 0.$$
The solution holds obviously for $\rho = \omega_1$ but this choice contradicts the fact that $E > e$. The closing condition can be interpreted as the intersection of the line given by $\rho \mapsto \rho \tfrac{\eta_1}{\omega_1}$ with the graph of the function $\zeta|_\R.$ The function $\zeta|_\R$ is anti-symmetric with respect to $\omega_1$ and has a simple pole in  $0$ and is convex for $\rho < \omega_1$ and concave for $\rho > \omega_1.$ Thus there exist two other intersection points if  and only if $-\wp(\omega_1) = - (E + \tfrac{1}{4}G) > \tfrac{\eta_1}{\omega_1},$ which makes the same curve. Otherwise there are no other intersection points and no closed curves.
\end{proof}
\begin{Exa}
A closed curve in this class is a elastic figure-eight in $H^2$. It is shown in \cite{LS} that there is no free elastic wavelike curve in the hyperbolic plane. Thus there is no Willmore torus coming from this construction.
\end{Exa}

\begin{The}
Let $g_2$ and $g_3$ be real constants with $g_2^3 - 27g_3^2 > 0.$ 
Then every rational point of the function
$$g :  i\R \setminus \{\omega_3 \Z\} \rightarrow i \R, g(\tilde \rho) =  \eta_1 (\tilde\rho + \omega_1 ) - \zeta(\tilde\rho + \omega_1 ) \omega_1$$
gives rise to a closed constrained elastic curve $\gamma_E$ ($E = \wp(\rho)$) as defined in Theorem  \ref{curves}) in  $H^2$ with curvature $G$. In particular, for fixed $g_2$ and $g_3$ there exist to every integer $n >1$  a  simply closed elastic curve with $n$ lobes.
\end{The}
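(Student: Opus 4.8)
The proof runs parallel to that of Theorem~\ref{Hopf1}, the only structural change being the location of the spectral parameter $\rho$. Since $D = g_2^3 - 27g_3^2 > 0$, the polynomial $P_3$ has three real roots $e_1 > e_2 > e_3$, the lattice $\Gamma$ is rectangular with $\omega_1 \in \R$, $\omega_3 \in i\R$, $\omega_2 = \omega_1 + \omega_3$, and $\eta_1 = \zeta(\omega_1) \in \R$, $\eta_3 = \zeta(\omega_3) \in i\R$. Under the normalization $\lambda = 0$ adopted above we have $\wp(\omega_3) = \tfrac16(\mu+G) = e_3$ and, as in Theorem~\ref{Hopf1}, $G = 4(\wp(\omega_3) - E)$; hence realizing $\gamma_E$ in $H^2$, i.e.\ $G<0$, forces $E > e_3$. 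The plan is to let $\rho$ run along the vertical line $\omega_1 + i\R$: there $E = \wp(\rho)$ sweeps out $(e_2,e_1)$, so $G<0$ and the curve is hyperbolic, and — unlike the real-axis branch used in the $D<0$ case — the closing map $g$ becomes purely imaginary, which is what allows the rational-angle condition to be met.

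First I would verify that $g$ is $i\R$-valued on this line. Writing $\rho = \omega_1 + \tilde\rho$ with $\tilde\rho \in i\R$, one has $\bar\rho = 2\omega_1 - \rho$, and combining the reality of the invariants (so $\zeta(\bar\rho) = \overline{\zeta(\rho)}$) with the quasi-periodicity $\zeta(2\omega_1 - \rho) = -\zeta(\rho) + 2\eta_1$ gives $\Re\zeta(\rho) = \eta_1$. Substituting this into $g(\tilde\rho) = \eta_1(\tilde\rho + \omega_1) - \zeta(\tilde\rho + \omega_1)\omega_1$ cancels the real parts and shows $g \in i\R$, as the statement asserts; it also shows $E = \wp(\rho)$ is real along the line.

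Next I would pin down the endpoint values and the quasi-period of $g$. As $\tilde\rho \to 0$ (so $\rho \to \omega_1$) one gets $g \to \eta_1\omega_1 - \eta_1\omega_1 = 0$; at $\tilde\rho = \omega_3$ (so $\rho = \omega_2$), using $\zeta(\omega_2) = \eta_1 + \eta_3$ together with the Legendre relation $\eta_1\omega_3 - \eta_3\omega_1 = \tfrac{\pi i}{2}$ one finds $g(\omega_3) = \eta_1\omega_3 - \eta_3\omega_1 = \tfrac{\pi i}{2}$. From $\zeta(\rho + 2\omega_3) = \zeta(\rho) + 2\eta_3$ and the same relation, $g$ satisfies $g(\tilde\rho + 2\omega_3) = g(\tilde\rho) + \pi i$. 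Writing $g = i\,h$ with $h$ real and continuous on all of $\omega_1 + i\R$ (this line avoids the poles of $\zeta$, which sit at $\Gamma$), these three facts give $h = \tfrac{k}{2}\pi$ at $\tilde\rho = k\omega_3$ for every $k \in \Z$. By the intermediate value theorem $h$ is therefore surjective onto $\R$, so $g$ is surjective onto $i\R$; note that no monotonicity argument is needed, only continuity and the linear growth of $h$ at the points $k\omega_3$.

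It then remains to read off the closing statement. By the monodromy Proposition of Section~\ref{CC}, $\gamma_E$ closes after $n$ periods with winding number $m$ exactly when $g(\rho) = \tfrac{m}{2n}\pi i$ with $\gcd(m,n)=1$, i.e.\ at a rational point of $g$. For $n \geq 2$ the value $\tfrac{m}{2n}\pi i$ is never a half-integer multiple of $\pi i$, hence is attained at some $\tilde\rho \notin \omega_3\Z$, that is at an $E=\wp(\rho)$ which is not a branch point, and Theorem~\ref{curves} then produces a genuine closed curve; choosing $m=1$ yields the simply closed $n$-lobed elastic curve. For $n=1$ the required value $\tfrac{m}{2}\pi i$ occurs only at the excluded branch points, which is precisely why the count is restricted to $n>1$. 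I expect the only delicate points to be bookkeeping rather than conceptual: fixing the correct branch and sign in the Legendre relation so the endpoints come out as $0$ and $\tfrac{\pi i}{2}$, and checking that the excluded set $\omega_3\Z$ matches exactly the branch points $E\in\{e_1,e_2\}$ barred in Theorem~\ref{curves}.
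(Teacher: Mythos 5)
Your proposal is correct and follows essentially the same route as the paper: restrict $\rho$ to the line $\omega_1 + i\R$, use $\overline{\zeta(\tilde\rho+\omega_1)} = -\zeta(\tilde\rho+\omega_1) + 2\eta_1$ to see that $g$ is purely imaginary, evaluate $g(0)=0$ and $g(\omega_3)=\tfrac{\pi i}{2}$ via the Legendre relation, and apply the intermediate value theorem to reach $\tfrac{1}{2n}\pi i$ for $n>1$; your extra observations (the quasi-period $g(\tilde\rho+2\omega_3)=g(\tilde\rho)+\pi i$ and surjectivity onto $i\R$) are harmless strengthenings the paper does not need. The only caveat is your parenthetical claim that for $n=1$ the value $\tfrac{m}{2}\pi i$ occurs \emph{only} at $\tilde\rho\in\omega_3\Z$ --- absent a monotonicity argument for $g$ on $(0,\omega_3)$ this is not established, but neither the theorem nor the paper's proof asserts it, so it does not affect the result.
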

\begin{proof}
The polynomial $P_3$ has three real roots and thus we can choose a $E > \tfrac{1}{6}(\mu + G)$ such that $P_3(E) < 0$ by varying $G < 0.$ The corresponding $\rho$ satisfies $ \rho = \tilde \rho + \omega_1$ with $\tilde \rho \in i \R$ and $$\overline{\zeta(\tilde \rho + \omega_1)} = - \zeta(\tilde \rho - \omega_1) = - \zeta(\tilde \rho + \omega_1) +  2 \eta_1.$$ Thus the function
$$g(\tilde \rho) =  \eta_1 (\tilde\rho + \omega_1 ) - \zeta(\tilde\rho + \omega_1 ) \omega_1$$	
is purely imaginary.  Further  $g(\omega_3) = \tfrac{1}{2}\pi i$ and $g(0) = 0$.  By the same argument as in Theorem \ref{Hopf1} we get a dense set of solutions. In particular, for $n>1$ we obtain $\tfrac{1}{2n} \pi i \in g(i \R).$  
\end{proof}
\begin{Rem}
In contrast to constrained elastic curves in $S^2$, elastic curves in $H^2$ never lie in a isospectral family of constrained elastic curves in other space forms.
\end{Rem}

\subsection{How to obtain the Space form}\label{spaceform}
We want to  show that the curves stated in Theorem \ref{curves} are already the constrained elastic curves we are looking for without applying any M\"obius transformations. We use the Poincare disc model or the upper half plane model of $H^2 \hookrightarrow \C$ (depending whether the function $g$ defined above is real or imaginary valued) and consider $S^2 = \C \cup \{\infty\}.$   The curve $\gamma_E$ given by Theorem \ref{curves}  is M\"obius equivalent to a constrained elastic curve $\gamma$ in a space form $\mathcal G$ of constant curvature $G$. A M\"obius transformation $M$ is fixed by its values on $3$ points.  We want to determine the M\"obius transformation $M$ from $\mathcal G$ to $\C P^1$ which maps the arclength parametrized constrained elastic curve $\gamma$ to $\gamma_E.$  Without loss of generality we can fix $\gamma(0) \in i\R.$\\

For real valued parameter $E$ the function $g$ is either real or imaginary valued.  In the first case the monodromy is a rotation which has two fixed points $0$ and $\infty$ in $\C P^1$ and this rotation must be an isometry of $\mathcal G.$ Thus we use the poincare disc model of the hyperbolic plane here. Since the inversion at the unit circle preserves the  constrained elastic property of a curve, the only M\"obius transformations left are $z \mapsto r z,$ for a real number $r.$ We can fix $r$ by asking the curve $\gamma_E$ to be arc length parametrized with respect to the induced metric (which we need only to check in one point), i.e., $|\gamma_E'(0)|_{\mathcal G}^2= 1$. In the second case (which only happens for constrained elastic curves in $H^2$), the hyperbolic space is given by the upper half plane and again the arclength property fixes the parameter $r.$ The choice of $r$ corresponds to the choice of the infinity boundary of the hyperbolic plane or respectively the image of the geodesic under the stereographic projection of $S^2.$ If the space form is $\R^2$, then multiplication with $r$ preserves the constrained elastic property.\\

\subsection{Constrained Willmore cylinders of revolution}
Constrained Willmore cylinders of revolution have constant mean curvature (CMC) in a $3-$dimensional space form by \cite{B}. For tori we have two cases to distinguish. Either the whole torus is CMC in one space form or the torus is constructed by the glueing of two CMC cylinders in the hyperbolic $3-$space (viewed as the inner of the unit ball in $\R^3$ for one cylinder and as the outer of the unit ball for the other cylinder) at the infinity boundary. In both cases we can associate to the immersion a Riemann surface - the spectral curve. The details concerning the construction of the immersed surfaces and its corresponding spectral curves can be found in \cite{Bob} in the first case in \cite{BaBob} in the second.   For a constrained Willmore torus of revolution its CMC spectral curve is  determined by the family of differential operators 
\begin{equation*}
D_1^a = \del_x
+    \begin{pmatrix}- ia &i \frac{\kappa}{2} \\ i \frac{\kappa}{2} & ia
         \end{pmatrix},
\end{equation*}
see \cite{H1},
where $\kappa$ is the curvature of its profile curve in the hyperbolic plane (G= -1) and $a \in \C \setminus\{0\}$. To be more concrete, the spectral curve is given  by the normalization and compactification of the analytic variety
$$\{(a, b) \in \C\setminus\{0 \} \times  \C\setminus\{0 \}| \text{b is  eigenvalue of the holonomy of } D_1^a\}.$$
The so defined spectral curve is a hyperelliptic curve over the $a-$plane and there exist by construction two involutions which cover the involutions 
$$\sigma: a \mapsto -a \text{ and } \rho: a \mapsto \bar a$$
on the $a-$plane. The spectral curve  is unbranched over $a \in \R$ (since the corresponding $D^a_1$ are in $\mathfrak{su}(2, \C)$) and thus it is in particular unbranched over $a = 0$ and $a = \infty$. Which of the above cases of constrained Willmore tori of revolution occur depends on whether the involution $\rho \circ \sigma$ of the CMC spectral curve has fixed points, which must lie over $a \in i \R.$ We show that these two different cases of CMC surfaces correspond to the distinction between orbitlike and wavelike profile curves.  Moreover, the different choices of the Sym-point $E$ used here to construct the curve correspond to the  different space forms in which the tori (respectively cylinders) have constant mean curvature. 
\begin{Pro}
Let $\gamma_E$ be an elastic curve in $H^2,$ as defined in Theorem \ref{curves} and $f$ the corresponding constrained Willmore cylinder of revolution.
Then $f$  is CMC in $H^3$ with mean curvature $|H|<1$ if and only if $\gamma_E$ is wavelike. 
If $\gamma$ is orbitlike we have the following:

For  $P_3(E) < 0$ the cylinder  $f$ is CMC in $S^3$. 

If $P_3(E) > 0$ $f$  is CMC in $H^3$ with mean curvature $H>1.$ 
\end{Pro}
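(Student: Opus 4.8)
The plan is to read off the ambient space form and the mean curvature of $f$ from the CMC spectral curve of the family $D_1^a$, and then to translate the resulting reality conditions back into the Weierstrass data $g_2,g_3,E$. By \cite{B} the cylinder $f$ is CMC in a $3$-dimensional space form, and by \cite{Bob}, \cite{BaBob}, \cite{H1} its geometry is encoded by the hyperelliptic holonomy curve $\Sigma$ of $D_1^a$ together with a Sym point $a_0$. I would first quote this dictionary precisely: the ambient curvature and $H$ are determined by the position of $a_0$ relative to the unbranched locus $a\in\R$, on which $D_1^a\in\mathfrak{su}(2,\C)$ and the holonomy lies in the compact group $\SU(2)$, and to the fixed locus $a\in i\R$ of $\rho\circ\sigma$. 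Concretely, a Sym point on the compact real locus $a_0\in\R$ reconstructs an immersion into $S^3$, while a Sym point on $a_0\in i\R$ reconstructs an immersion into $H^3$, with $|H|<1$ or $H>1$ according to whether $a_0$ lies between or outside the branch points of $\Sigma\to\C P^1_a$ that meet the fixed locus $i\R$ of $\rho\circ\sigma$; the latter is the regime in which the cylinder reaches the ideal boundary, matching the gluing description in the paragraph preceding the statement.

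The second step is to identify $\Sigma$ with the elliptic curve already produced by $\wp$. The Remark after Lemma \ref{Lemma1} identifies $\kappa\mapsto q$ as the Miura transformation, so the holonomy curve of the Zakharov--Shabat operator $D_1^a$ is isomorphic to the Bloch spectral curve of the Schr\"odinger operator $\del_x^2-2\wp(\,\cdot+x_0)$ appearing in Theorem \ref{curves}, i.e. to the curve $\wp'^2=P_3(\wp)$ with invariants $g_2,g_3$. Under this isomorphism $\sigma\colon a\mapsto -a$ corresponds to the two-to-one map $a\mapsto E$ expressing $E$ as an affine function of $a^2$, the branch points of $\Sigma\to\C P^1_a$ lie over $E=e_1,e_2,e_3,\infty$, and the Sym point maps to the value $E=\wp(\rho)$ of the parameter of $\gamma_E$. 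Consequently $a_0^2$ is an affine function of $E$, so $a_0$ is real or purely imaginary exactly when that affine function is non-negative or non-positive; since $\gamma_E$ is elastic in $H^2$ we have $\lambda=0$ and $G=-1$, which pins $E$ and places it in a definite interval relative to $e_1,e_2,e_3$. Whether $a_0$ falls between or outside the branch points on $i\R$ is then governed by the sign of $P_3(E)=\wp'(\rho)^2$ together with the number of real roots of $P_3$, i.e. the wavelike ($D<0$) versus orbitlike ($D>0$) dichotomy.

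It then remains to run the case analysis, using the explicit root and $\rho$ locations computed in Subsection \ref{CC}. For an orbitlike curve with $P_3(E)<0$ the Sym point sits on the compact locus $a_0\in\R$, so the reconstruction is the $\SU(2)$-one and $f$ is CMC in $S^3$. For an orbitlike curve with $P_3(E)>0$, and for every wavelike curve, the Sym point lies on $a_0\in i\R$ and $f$ is CMC in $H^3$; the two subcases are separated by whether $a_0$ lies outside the $\rho\circ\sigma$-fixed branch points (orbitlike, $P_3(E)>0$, giving $H>1$) or between them (wavelike, $D<0$, giving $|H|<1$). Feeding $a_0$ into the Sym--Bobenko formula of \cite{H1} yields the stated inequalities for $H$. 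For the equivalence I would argue the converse directly: $|H|<1$ in $H^3$ forces $a_0$ to lie on $i\R$ strictly between two $\rho\circ\sigma$-fixed branch points, which in turn forces $P_3$ to have a single real root, i.e. $D<0$, so that the wavelike case is exactly characterised by being CMC in $H^3$ with $|H|<1$.

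The hard part will be the second and third steps: pinning down the affine relation between $a_0^2$ and $E$ with the correct normalisation, so that the $G=-1$ value of $E$ for a profile curve in $H^2$ lands in the interval relative to $e_1,e_2,e_3$ predicted above, and extracting the sharp inequalities $|H|<1$ versus $H>1$ from the Sym--Bobenko reconstruction. This amounts to a careful bookkeeping of the two real structures $\sigma$ and $\rho$ and of signs, tying the KdV spectral data of the profile curve to the space-form data of the ambient CMC immersion, and it is precisely where \cite{Bob}, \cite{BaBob} and \cite{H1} must be invoked with attention to conventions.
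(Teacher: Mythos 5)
Your global strategy coincides with the paper's---identify the CMC spectral curve of $D_1^a$ with the elliptic KdV curve $y^2=P_3(E)$ via the Miura transformation/gauge, transport the Sym points, and decide the space form through the real structures $\sigma$ and $\rho$---but the dictionary you set up in your first step, which is exactly the bookkeeping you defer to ``the hard part,'' is wrong, and the argument does not survive it. With the correct normalisation the parameter map is $E=-a^2+\tfrac{1}{6}(\mu-1)$, so $a_0^2=\wp(\omega_3)-E$, where $\wp(\omega_3)=\tfrac{1}{6}(\mu+G)$ is the smallest (resp.\ only) real root of $P_3$ because $\lambda=0$. Every closed curve of Theorem~\ref{curves} in $H^2$ has $E>\wp(\omega_3)$, so $a_0\in i\R$ in \emph{all} three cases of the Proposition. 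In particular your assertion that for orbitlike curves with $P_3(E)<0$ the Sym point sits on the compact locus $a_0\in\R$---the step from which you conclude CMC in $S^3$---is false, and applying your own dictionary consistently would place every case in $H^3$, contradicting the statement. The criterion that actually separates $S^3$ from $H^3$ here is not $a_0\in\R$ versus $a_0\in i\R$, but whether the Sym points (all lying over $i\R$) are \emph{fixed points} of the antiholomorphic involution $\rho\circ\sigma$: for orbitlike curves the branch points of the CMC curve lie over $a\in i\R$ at $a^2=\wp(\omega_3)-e_i<0$, the fixed ovals project to the segment of $i\R$ between them, and $P_3(E)<0$, i.e.\ $e_2<E<e_1$, is exactly the condition that the Sym points lie on a fixed oval, whence $S^3$ by the Sym--Bobenko formula; $P_3(E)>0$ puts them off the fixed ovals and yields $H^3$ with $H>1$.

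The wavelike case exposes a second defect of the same kind. You propose to detect $|H|<1$ by the Sym point lying ``between'' branch points on $i\R$; but for wavelike curves there are no branch points over $i\R$ at all: the unique real branch point $E=\tfrac{1}{6}(\mu-1)$ of the KdV curve pulls back to $a=0$, where the parameter map $a\mapsto E$ is itself branched, so the induced covering of spectral curves is unbranched there (this cancellation is the point of the paper's appeal to Corollary~\ref{g3}, using $\lambda=0$; it also shows that your claim that the CMC curve over the $a$-plane is branched over $E=e_3$ and $E=\infty$ is incorrect). Hence $\rho\circ\sigma$ is fixed-point free in the wavelike case, the surface is not a CMC torus in any single space form, and the conclusion $|H|<1$ comes from the Babich--Bobenko description \cite{BaBob}: the torus is glued from two noncompact CMC cylinders in $H^3$ meeting the ideal boundary. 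Your between/outside criterion both misses this structural dichotomy (existence or not of fixed points of $\rho\circ\sigma$) and mislabels the cases---``between the branch points'' is the $S^3$ case, not the $|H|<1$ case---so your proposed converse argument fails as well. To repair the proof you would have to replace your step one by the fixed-oval analysis just described, which is in substance the paper's argument.
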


\begin{proof}
The torus on which the Schwarzian derivative of the profile curve is defined is referred to in the following as the KdV spectral curve. It is an elliptic curve over the $E-$plane defined by the equation:
\begin{equation}\label{KdVspectralcurve} y^2= 4E^3 - g_2E - g_3.\end{equation} 
It can also be obtained by considering  the operator \begin{equation*}
D^E_2 = \del_x  +  \begin{pmatrix} 0 & q  - E -  \tfrac{1}{6}c \\ -1 & 0 \end{pmatrix},
\end{equation*}
where $q$ is the Schwarzian derivative of the curve and $c$ is as in lemma \ref{Lemma1} \footnote{Instead of the holonomies of $D^a_1$ over the $a-$plane, we consider the holonomies of $D^E_2$ over the $E-$plane in the above construction.}: This follows from the fact that a $\C^2-$function $(\psi_1, \psi_2)$ lies in the kernel of $D^E_2$ if and only if  $\psi_1 =  \psi_2'$ and $\psi_2$ solves the equation 
$$\psi_2'' + ( q - E -  \tfrac{1}{6}c) \psi_2 = 0.$$
\noindent
We first want to show how the CMC spectral curve of the surface and the KdV spectral curve of its profile curve are related. Let $$E =  - a^2  + \tfrac{1}{6}(\mu -1).$$ The equation defines a double covering of the $E-$plane by the $a-$plane branched at $E = \frac{1}{6} (\mu - 1)$ and $E = \infty.$ Further, we have $q = i\frac{\kappa'}{2}+ \frac{\kappa^2}{4} -  \frac{1}{4}.$ Then the gauge transformation from $D_2^E$ to $D_1^a$  is given by 
 \begin{equation*}
g = \begin{pmatrix}-i\frac{\kappa}{2} - ia  & - i\frac{\kappa}{2}  + ia \\1 &1\end{pmatrix},
\end{equation*}
for $a \in \C\setminus\{0\}$\footnote{The spectral curve is an analytic variety and it is thus determined by its generic points. }. 
This gauge defines a double covering $\tau$ of the
KdV spectral curve by the CMC spectral curve which is unbranched for $a \in \C \setminus\{0\}.$ Thus we only need to investigate what happens over $a= 0$ and $a= \infty.$
Since the CMC spectral curve is unbranched for these points and the parameter covering is branched, the covering of the spectral curves $\tau$ is unbranched if and only if $E= \infty$ and $E= \frac{1}{6} (\mu - 1)$ are branch points of the KdV spectral curve. This is the case by corollary \ref{g3}, since $\lambda = 0$ for constrained Willmore tori of revolution.\\

As mentioned before, a constrained Willmore torus of revolution is a CMC {\it torus} in a space form, if and only if the involution $\rho \circ \sigma$ has fixed points.
Since $\rho \circ \sigma$ interchanges the points over $a = \infty$ (see \cite{B, H1}), it has fixed points if and only if there are branch points of the CMC spectral curve over $a \in i\R.$ This happens if and only if the KdV spectral curve is branched over $E \in \R$ and $E > \frac{1}{6} (\mu - 1).$ Otherwise the torus is obtained through the glueing of two non compact CMC, $|H| < 1,$ cylinders in $H^3$ by \cite{BaBob}. \\

For wavelike elastic curves the KdV spectral curve has only $1$ real branch point over $E = \frac{1}{6} (\mu - 1)$ which vanishes over $a=0$. Therefore  there is no branch point of the CMC spectral curve over $a \in i\R.$\\ 

For orbitlike elastic curves the polynomial $P_3$ has $3$ real roots. By corollary \ref{g3} all roots are greater or equal to $\frac{1}{6}(\mu -1).$  Thus all branch points of the CMC-spectral curve lie over $a \in i\R$ and the involution $\rho \circ \sigma$ has fixpoints.  By the Sym-Bobenko formula, see \cite{Bob}, the surface is CMC in $S^3$ if the Sym-points are fixed under the involution $\rho \circ \sigma$ (which happens for $P_3(E) < 0$). If the Sym-points are no fix points of the involution ($P_3(E) > 0$), the surface is CMC in $H^3$.
\end{proof}
\begin{Rem}
A similar covering is given between the constrained Willmore spectral curve of a Hopf torus and  the KdV spectral curve of its spherical profile curve. In this case we have \begin{equation*}
D_1^a = \del_x
+    \begin{pmatrix}- ia & i \frac{\kappa}{2} - 1 \\ i\frac{\kappa}{2} + 1 & ia
         \end{pmatrix},
\end{equation*}
see \cite{H1}, the operator $D_2^E$ is defined as before but with $G= 1$ and the parameter covering is given by $E= -a^2 + \frac{1}{6}(\mu - 5).$ For $a \in \C \setminus\{0\}$ the gauge between the operators slightly changes and becomes
$$\tilde g = \begin{pmatrix}-i\frac{\kappa}{2} + 1 - ia  & -i\frac{\kappa}{2} - 1 + ia \\1 &1\end{pmatrix}.$$
As before the induced covering of the spectral curves is not branched for those points where the gauge is defined. Thus we need only to take a closer look at the points over $a= 0$ and $a= \infty.$ For $a= \infty$ the corresponding $E= \infty$ is a branch point of the $\wp-$function and the covering of the spectral curves is unbranched for these two points over $a= \infty$ as before.
But this does not hold for the points over $a = 0$ which corresponds to $E=  \frac{1}{6}(\mu - 5)$. By corollary \ref{g3} this is never a branch point of the KdV spectral curve. Hence the covering of the spectral curves is branched at the points over $a = 0$ and  by the Riemann-Hurwitz formula the constrained Willmore spectral curve of a Hopf torus has genus $2$. 
\end{Rem}
\subsection{Conformal Type and Willmore Energy}
The conformal types of tori of revolution and Hopf tori in terms of their profile curve were derived in \cite{LS1} and \cite{P}. 
\begin{The}
Let $f: T^2 \rightarrow S^3$ be either a constrained Willmore torus of revolution or a constrained Willmore Hopf torus determined by the formulas of Theorem \ref{curves} for fixed parameters $g_2, g_3, E \in \R$. Then we have the following.
\begin{itemize}
\item If $f$ is a constrained Willmore torus of revolution, then its conformal class is given by the lattice generated by $z_1 = 2\pi$ and  $z_2 = i \sqrt{G} L$ and its Willmore energy is 
$$\mathcal W (f) = 8 n \eta_1\pi - 4 n \omega_1 \wp(\omega_3) \pi.$$

\item If $f$ is a constrained Willmore Hopf torus, then its conformal class is given by the lattice generated by $z_1 = 2\pi$ and  $z_2 = \tfrac{1}{2}GA +  \tfrac{1}{2} i \sqrt{G} L  $ and its Willmore energy  is  
$$\mathcal W (f) = \tfrac{1}{\sqrt G}(16 n  \eta_1\pi - 8 n \omega_1 EÊ\pi).$$
\end{itemize}
Here $L = 2n\omega_1$ denotes the length of the curve in the respective space form and $A$ is the oriented enclosed area of the curve in $S^2$ is given by 
$$\tfrac{1}{2}GA  \text{ mod } 2 \pi=  \left( m \pi  - 4i n \eta_1 x_0  - 2n \omega_1 ( \tfrac{1}{2}\kappa(0)- 2i\zeta(x_0) \right) \text{ mod } 2\pi.$$
\end{The}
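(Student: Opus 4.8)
The plan is to split the four claims into a conformal-geometric part, for which I would invoke the reductions of \cite{LS1} and \cite{P} essentially verbatim, and an explicit part carried out with the Weierstrass functions. By \cite{LS1} the conformal type of an equivariant torus of revolution is determined by the hyperbolic length $L$ of its profile curve, and by \cite{P} that of a Hopf torus by the length $L$ and the oriented enclosed area $A$ of its spherical profile curve, in both cases through the lattice spanned by $z_1 = 2\pi$ (the period of the equivariant $x$-action of Proposition \ref{equivariant}) and a $z_2$ assembled from $L$ and $A$. So the first thing I would do is read off $L$. Since $\gamma_E$ is arclength parametrized in $x$ and, by Lemma \ref{compa}, its geodesic curvature $\kappa$ is a function of $\wp(x+x_0)$, it is $2\omega_1$-periodic; the closing condition forces the curve to close after exactly $n$ periods of $\wp$, hence $L = 2n\omega_1$. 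Substituting into the two conformal-type formulas produces $z_2 = i\sqrt{G}L$ and $z_2 = \tfrac12 GA + \tfrac12 i\sqrt{G}L$, which are the asserted lattices once $A$ is known.

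For the Willmore energy I would use that, by the reductions in Section 2, $\mathcal W(f)$ is a fixed multiple of $\int_\gamma\kappa^2\,ds$ in the revolution case and of $\int_\gamma(\kappa^2+G)\,ds$ in the Hopf case. Taking the real part of the identity $\wp(x+x_0) = -\tfrac{i}{4}\kappa' - \tfrac18\kappa^2 - b$ from Lemma \ref{compa} gives $\kappa^2 = -8(\Re\wp(x+x_0)+b)$, and comparing $b = \tfrac{1}{12}(\mu+G)$ with $\wp(\omega_3) = \tfrac16(\mu+G)$ identifies $b = \tfrac12\wp(\omega_3)$. Using $\zeta' = -\wp$ and the quasi-periodicity $\zeta(u+2n\omega_1) = \zeta(u) + 2n\eta_1$, the integral $\int_0^{2n\omega_1}\wp(x+x_0)\,dx$ collapses to the real number $-2n\eta_1$, so that
\[
\int_\gamma\kappa^2\,ds = 16 n\eta_1 - 8 n\omega_1\wp(\omega_3).
\]
In the revolution case ($G=-1$) this is $\tfrac{2}{\pi}\mathcal W(f)$, which is the stated energy. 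In the Hopf case I would rewrite $\wp(\omega_3) = E + \tfrac{G}{4}$ using $G = 4(\wp(\omega_3)-E)$ from Theorem \ref{Hopf1} and add $\int_\gamma G\,ds = 2n\omega_1 G$; the $G$-terms cancel, giving $\int_\gamma(\kappa^2+G)\,ds = 16n\eta_1 - 8n\omega_1 E$, and the scaling factor $\pi/\sqrt{G}$ relating a sphere of curvature $G$ to the unit sphere in the reduction of \cite{P} yields the stated energy.

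The remaining ingredient is the area. By the Gau\ss--Bonnet theorem the oriented enclosed area satisfies $\tfrac12 GA \equiv m\pi - \tfrac12\int_\gamma\kappa\,ds \pmod{2\pi}$ with $m$ the winding number, so everything reduces to integrating $\kappa$. Here I would use the representation $\kappa = -2i\big(\zeta(x+x_0) - \zeta(x-x_0) + \mathrm{const}_1\big)$ from the proof of Lemma \ref{compa} (valid since $x_0\in i\R$ forces $\overline{\zeta(x+x_0)} = \zeta(x-x_0)$), together with $\int\zeta = \log\sigma$ and the $\sigma$-monodromy $\sigma(u+2\omega_1) = -e^{2\eta_1(u+\omega_1)}\sigma(u)$. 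The two $i\pi$ branch contributions cancel and $\int_0^{2\omega_1}\big[\zeta(x+x_0)-\zeta(x-x_0)\big]\,dx$ reduces to $4\eta_1 x_0$; evaluating $\kappa$ at $x=0$ pins down $\mathrm{const}_1 = \tfrac{i}{2}\kappa(0) - 2\zeta(x_0)$, and periodicity gives $\int_\gamma\kappa\,ds = n\int_0^{2\omega_1}\kappa\,dx$. Assembling these yields the displayed expression for $\tfrac12 GA$.

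I expect the area computation to be the main obstacle. It is the only place where the $\sigma$-monodromy and its $i\pi$ branch enter, and where the orientation freedom $x_0\mapsto -x_0$ (equivalently $\kappa\mapsto-\kappa$), the choice of $x_0\in i(0,-i\omega_3)$, the Gau\ss--Bonnet sign, and the $\mathrm{mod}\,2\pi$ (in fact $\mathrm{mod}\,4\pi$ for $A$ itself) indeterminacy all interact; reconciling these so as to land on the precise representative in the statement is where the real care is needed, whereas the length and energy computations are essentially forced once Lemma \ref{compa} and the period identity $\int\wp = -2n\eta_1$ are in hand.
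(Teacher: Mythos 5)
Your proposal is correct and follows essentially the same route as the paper's proof: the conformal-type reductions to $(L,A)$ via \cite{LS1} and \cite{P} with $L=2n\omega_1$, the Willmore energies from integrating the real part of $\wp(x+x_0)=-\tfrac{i}{4}\kappa'-\tfrac{1}{8}\kappa^2-b$ over $n$ periods using the quasi-periodicity $\zeta(u+2n\omega_1)=\zeta(u)+2n\eta_1$, and the area from Gau\ss--Bonnet together with $\int\kappa$ evaluated through $\ln\sigma$ and the $\sigma$-monodromy. Your only deviations are cosmetic --- you substitute $G=4(\wp(\omega_3)-E)$ where the paper uses $E=\tfrac{1}{6}(\mu-\tfrac{1}{2}G)$, and the signs you obtain in the area formula sit within the $x_0\mapsto-x_0$, $\kappa\mapsto-\kappa$ orientation ambiguity you correctly flag (an ambiguity on which the paper's own proof is itself internally inconsistent between its final line and the theorem statement).
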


\begin{proof}
We first compute the Willmore energy of the tori.
Recall that for constrained Willmore tori of revolution and constrained Willmore Hopf tori  we have
$$ \wp(x+ x_0) = \tfrac{1}{4} i \kappa' - \tfrac{1}{8} \kappa^2 - \tfrac{1}{12}(\mu + G),$$
where $\kappa$ is the geodesic curvature of the arclength parametrized profile curve in the space form of curvature $G$. Thus the integral of the real part of the Weierstrass $\wp$ function, i.e., the real part of Weierstrass $\zeta-$function determines the bending energy of the curve. We have
\begin{equation*}
\begin{split}
\int_\gamma (\kappa^2 + \tfrac{2}{3}(\mu + G)) ds = 8 n (Re(\zeta(x-x_0 + 2\omega_1) - \zeta(x-x_0)) ) =  16 n \eta_1,
\end{split}
\end{equation*}
if the curve closes after $n$ periods of $\wp$. 
For constrained Willmore tori of revolution the Willmore energy is given by 
$$\mathcal W(f) =  \tfrac{1}{2}\pi  \int_\gamma \kappa^2 ds.$$
Since constrained Willmore tori of revolution comes from elastic curves in $H^2$, we have $\wp(\omega_3) = \tfrac{1}{6}(\mu + G)$
 and thus
$$ \mathcal W(f) = 8 n\eta_1  \pi - 4n\omega_1\wp(\omega_3)\pi.$$
\\

\noindent
For constrained Willmore Hopf tori  we have 
$$\mathcal W (f) =   \tfrac{1}{\sqrt G} \pi \int_\gamma (\kappa^2 + G) ds.$$
Since $E = \tfrac{1}{6}(\mu - \tfrac{1}{2}G)$ the Willmore energy of a constrained Willmore Hopf torus is computed to be
$$\mathcal W (f) = \tfrac{1}{\sqrt G}(16 n \eta_1 \pi- 8n \omega_1E \pi).$$

Now we turn to the conformal type of the tori considered. The conformal type is given by two vectors generating the lattice $\Gamma \in \C.$ In the case of tori of revolution these are given by
$$z_1 = 2 \pi \quad \text{ and } \quad z_2 = i \sqrt{G} L$$
where $L$ is the length of the curve in the space form of curvature $G <0$.
Since the profile curve $\gamma_E$ is arclength parametrized, we get
that the length of the curve is  $2 n \omega_1.$\\

\noindent 
For constrained Willmore Hopf tori the lattice is generated by
$$z_1 = 2\pi \quad \text{ and } \quad z_2 = \tfrac{1}{2}G A + \tfrac{1}{2}i \sqrt{G}L,$$
where $A$ is the oriented enclosed area of the curve, see \cite{P}, which is only well defined modulo $\tfrac{1}{G}4 \pi$.
By the Gau\ss-Bonnet theorem the enclosed area of a curve is given by  $$G A =  2\pi m -  \int_\gamma \kappa ds \text{ mod } 4 \pi,$$   where $m$ is the winding number of the curve. On the other hand we have: \newline Im$\zeta(x+x_0)  = \frac{\kappa}{4} -\frac{ \kappa_0 }{4}- i\zeta(x_0). $
Thus: 
\begin{equation*}
\begin{split}
\tfrac{1}{2}\int_\gamma \kappa ds - 2 n \omega_1 ( \tfrac{1}{2}\kappa(0)- 2i\zeta(x_0)) &=  2 \text{Im}\left( \ln(\sigma(x+x_0 + 2 n \omega_1)) - \ln( \sigma(x+ x_0)) \right )\\
&= - i \ln\left (\frac{e^{2  n \eta_1 (x+x_0 + \omega_1)}\sigma(x+x_0)\sigma(x-x_0) }{e^{2  n \eta_1 (x- x_0 + \omega_1)}\sigma(x+ x_0)\sigma(x- x_0)}\right)\\
&= -i \ln\left (e^{4 n \eta_1 x_0}\right).
\end{split}
\end{equation*}
The logarithm is only well defined modulus $2\pi i$. We obtain 
$$\tfrac{1}{2} \int_\gamma \kappa ds - 2 n \omega_1 ( \tfrac{1}{2}\kappa_0- 2i\zeta(x_0)) =  (2\pi - 4ni \eta_1 x_0 ) \text{ mod } 2 \pi.$$
Therefore $\tfrac{1}{2}G A$ is given by:
$$ \left(\pi m   - 4i n \eta_1 x_0  + 2n \omega_1( \tfrac{1}{2}\kappa_0- 2i\zeta(x_0) \right) \text{ mod } 2\pi  .$$

\end{proof}

\end{document}